\documentclass[10pt]{amsart}
\usepackage{amssymb,amsthm,amsmath,amsfonts}
\usepackage{graphicx}
\usepackage{epstopdf}
\usepackage{commath}
\usepackage{hyperref}
\usepackage[hypcap]{caption}
\textwidth=16cm
\oddsidemargin=0pt
\evensidemargin=0pt

\def\R{{\mathbb R}}
\def\Rr{{\mathcal R}}
\def\C{{\mathbb C}}

\def\Z{{\mathbb Z}}

\def\Tr{\;\mathrm{Tr}\,}
\def\<{\langle}
\def\>{\rangle}
\def\P{{\mathbb P}}

\def\U{{\mathbb U}}
\def\V{{\mathbb V}}

\def\I{\mathbb I}

\def\X{\mathbb X}

\def\Y{\mathbb Y}

\newcommand{\be}{\begin{equation}}
\newcommand{\ee}{\end{equation}}
      \newtheorem{theorem}{Theorem}[section]

       \newtheorem{lemma}[theorem]{Lemma}
       \newtheorem{remark}{Remark}[section]
\newtheorem{definition}{Definition}[section]

\title{On the Matsumoto--Yor property in free probability}
\author[K. Szpojankowski]{Kamil Szpojankowski}
\address[K. Szpojankowski]{Department of Pure Mathematics, University of Waterloo\\
Waterloo, Ontario N2L 3G1, Canada\\
and
Wydzia\l{} Matematyki i Nauk Informacyjnych\\
Politechnika Warszawska\\
ul. Koszykowa 75\\
00-662 Warszawa, Poland.}
\email{k.szpojankowski@mini.pw.edu.pl}

\subjclass[2010]{Primary: 46L54. Secondary: 62E10.}

\keywords{Matsumoto--Yor property,
conditional moments, freeness, free Generalized Inverse Gaussian distribution, free Poisson distribution}

\begin{document}
\begin{abstract}
	We study the Matsumoto--Yor property in free probability. We prove that the limiting empirical eigenvalue distribution of the GIG matrices and the Marchenko--Pastur distribution have the free Matsumoto--Yor property. Finally we characterize these distributions by a regression properties in free probability. 
\end{abstract}
\maketitle
\section{Introduction}
This paper is devoted to the study of so called Matsumoto--Yor property analogues in free probability. In \cite{MatsumotoYor} Matsumoto and Yor noted an interesting independence property of the Generalized Inverse Gaussian (GIG) law and the Gamma law, namely if $X$ is distributed $GIG(-p,a,b)$, where $p,a,b>0$ and $Y$ has Gamma distribution  $G(p,a)$ then 
\begin{align*}
U=\frac{1}{X+Y}\qquad\mbox{and}\qquad V=\frac{1}{X}-\frac{1}{X+Y}
\end{align*}
are independent and distributed $GIG(-p,b,a)$ and  $G(p,b)$ respectively. 

By a Generalized Inverse Gaussian distribution $GIG(p,a,b)$, $p\in\mathbb{R},$ $a,b>0$ we understand here a probability measure given by the density
\begin{align*}
f(x)=\frac{1}{K(p,a,b)}x^{p-1}e^{-ax-\frac{b}{x}}\mathit{I}_{(0,+\infty)}(x).
\end{align*}
Where $K(p,a,b)$ is a normalization constant.
A Gamma distribution $G(p,a)$, $p,a>0$ is given by the density
\begin{align*}
g(x)=\frac{a^p}{\Gamma(p)}x^{p-1}e^{-ax}\mathit{I}_{(0,+\infty)}(x).
\end{align*}

Next in \cite{LetacWesMY} Letac and Weso\l{}owski proved that the above independence property is a characterization of GIG and Gamma laws, i.e. for independent $X$ and $Y$ random variables $U$ and $V$ defined as above are independent if and only if $X$ has $GIG(-p,a,b)$ distribution and $Y$ has $G(p,a)$ distribution, for some $p,a,b>0$. In the same paper authors extend this result for real symmetric matrices.

It is remarkable that many characterizations of probability measures by independence properties have parallel results in free probability. A basic example of such analogy is Bernstein's theorem which says that for independent random variables $X$ and $Y$, variables $X+Y$ and $X-Y$ are independent if and only if $X$ and $Y$ are Gaussian distributed. The free probability analogue proved in \cite{NicaChar} gives a characterization of Wigner semicircle law by similar properties where the independence assumptions are replaced by freeness assumptions. Another well studied example of such analogy is Lukacs theorem which characterizes the Gamma law by independence of $X+Y$ and $X/(X+Y)$ for independent $X$ and $Y$ (cf. \cite{Lukacs}). The free analogue  (see \cite{BoBr2006,SzpLukacsProp}) turns out to characterize a Marchenko--Pastur law which is also known as the free Poisson distribution. 

In this paper we will be particularly interested in characterizations of probability measures by regression assumptions. An example of such characterization are well known Laha--Lukacs regressions \cite{LahaLukacs} which characterize so called Meixner distributions by assumption that for independent $X$ and $Y$ first conditional moment of $X$ given by $X+Y$ is a linear function of $X+Y$ and second conditional moment of the same type is a quadratic function of $X+Y$. In \cite{BoBr2006,EjsmontLL} authors studied free analogues of Laha-Lukacs regressions, which characterize free Miexner distribution defined in \cite{anshelevich}. This results motivated a more intensive study of regression characterizations in free probability (see \cite{EjsmontTD,EjsmontFranzSzpojankowski:2015:convolution,SzpojanWesol})

The aim of this paper is to settle a natural question if also the Matsumoto--Yor property possesses a free probability analogue. The answer turns out to be affirmative. We prove a free analogue of the regression version of this property proved in \cite{WesoloMY}, where instead of assuming independence of $U$ and $V$, constancy of regressions of $V$ and $V^{-1}$ given $U$ is assumed. The role of the Gamma law is taken as in the case of Lukacs theorem by the Marchenko--Pastur distribution, the free probability analogue of the GIG distribution turns ot to be a measure defined in \cite{Feral}, where it appears as a limiting empirical spectral distribution of complex GIG matrices. The proof of the main result partially relies on the technique which we developed in our previous papers \cite{SzpojanWesol,SzpDLRNeg}, however some new ideas were needed. As a side product we prove that the free GIG and the Marchenko--Pastur distributions have a free convolution property which mimics the convolution property of the classical GIG and Gamma distributions.

The paper is organized as follows, in Section 2 we give a brief introduction to free probability and we investigate some basic properties of the free GIG distribution, Section 3 is devoted to prove that the Marchenko--Pastur and the free GIG distribution have a freeness property which is an analogue of the classical Matsumoto--Yor property. Section 4 contains the main result of this paper which is a regression characterization of the Marchenko--Pastur and the free GIG distributions.
\section{Preliminaries} 
In this section we will give a short introduction to free probability theory, which is necessary to understand the rest of the paper. Next we give the definitions of the Marchenko--Pastur and the free GIG distribution. Since the free GIG appears for the first time in the context of free probability, we study some basic properties of this distributions.
\subsection{Non-commutative probability space and distributions}
The following section is a brief introduction to free probability theory, which covers areas needed in this paper. A comprehensive introduction to this subject can be found in \cite{NicaSpeicherLect,VoiDykNica}.

By a (non-commutative) $\star$-probability space we understand a pair $\left(\mathcal{A},\varphi\right)$, where $\mathcal{A}$ is a unital $\star$-algebra, and $\varphi:\mathcal{A}\to\mathbb{C}$ is a linear functional such that $\varphi\left(\mathit{1}_{\mathcal{A}}\right)=1$, moreover we assume that $\varphi$ is faithful, normal, tracial and positive. Elements of $\mathcal{A}$ are called random variables.

In this paper we will consider only compactly supported probability measures on the real line. Such measures are uniquely determined by the sequence of moments. Thus for a self-adjoint random variable $\X\in\mathcal{A}$ we can define the distribution of $\X$ as a unique, real, compactly supported probability measure $\mu$ for which we have
\begin{align*}
\varphi\left(\X^n\right)=\int t^n d\mu(t),
\end{align*}
if such measure exists.\\
The joint distribution of random variables is also identified with the collection of joint moments. More precisely for an n-tuple of random variables $\left(\X_1,\ldots,\X_n\right)$, for $n\geq 2$ by the joint distribution we understand the linear functional $\mu_{\left(\X_1,\ldots,\X_n\right)}:\mathbb{C}\,\langle x_1,\ldots,x_n\rangle\to\mathbb{C}$, where $\mathbb{C}\,\langle x_1,\ldots,x_n\rangle$ is a space of polynomials with complex coefficients in non-commuting variables $x_1,\ldots,x_n$, such that
 \begin{align*}
 \mu_{\left(\X_1,\ldots,\X_n\right)}(P)=\varphi\left(P\left(\X_1,\ldots,\X_n\right)\right),
 \end{align*}
 for any $P\in\mathbb{C}\,\langle x_1,\ldots,x_n\rangle$.
 
When we deal with random variables, which one dimensional marginal distributions have compact support, we can understand the notion of classical independence as a rule of calculating mixed moments. In the non-commutative probability theory there are several notions of independence which for compactly supported random variables are also nothing else but rules of calculating joint moments from marginal moments. The most important notion of independence in the non-commutative probability is freeness defined by Voiculescu in \cite{VoiculescuAdd}.\\
Let $I$ be an index set. For a non-commutative probability space $(\mathcal{A},\varphi)$ we call unital subalgebras $(\mathcal{A}_i)_{i\in I}$ free if for every $n\in\mathbb{N}$ we have
\begin{align*}
\varphi(\X_1\cdot\ldots\cdot\X_n)=0,
\end{align*}
whenever:
\begin{itemize}
	\item $\varphi\left(X_i\right)=0$ for $i=1,\ldots,n$, 
	\item $\X_i\in\mathcal{A}_{j(i)}$ for $i=1,\ldots,n$,
	\item $i(1)\neq i(2) \neq\ldots \neq i(n)$.
\end{itemize} 
The last condition means that neighbouring random variables are from different subalgebras.\\
Random variables $\X$ and $\Y$ are free if algebra generated by $\{\X,\mathbb{I}\}$ is free from algebra generated by  $\{\Y,\mathbb{I}\}$.\\
If $\X$ has distribution $\nu$ and $\Y$ has distribution $\mu$ and $\X$ and $\Y$ are free we call the distribution of $\X+\Y$ the free convolution of $\nu$ and $\mu$ and denote $\nu\boxplus\mu$.\\
The definition of freeness be understand as a rule of calculating mixed moment, since subalgebras are unital, and we can always center given random variable $\X$ by subtracting $\varphi(\X)\mathbb{I}$. \\
 
We will use in this paper the notion of asymptotic freeness for random matrices.
For any $n=1,2,\ldots$, let $(\X_1^{(n)},\ldots,\X_q^{(n)})$ be a family of random variables in a non-commutative probability space $(\mathcal{A}_n,\varphi_n)$. The sequence of distributions $(\mu_{(\X_i^{(n)},\,i=1,\ldots,q)})$ converges as $n\to\infty$ to a distribution $\mu$ if $\mu_{(\X_i^{(n)},\,i=1,\ldots,q)}(P)\to \mu(P)$ for any $P\in\C\,\langle x_1,\ldots,x_q\rangle$. If additionally $\mu$ is a distribution of a family $(\X_1,\ldots,\X_q)$ of random variables in a non-commutative space $(\mathcal{A},\varphi)$ then we say that $(\X_1^{(n)},\ldots,\X_q^{(n)})$ converges in distribution to $(\X_1,\ldots,\X_q)$. Moreover, if $\X_1,\,\ldots,\X_q$ are freely independent then we say that $\X_1^{(n)},\ldots,\X_q^{(n)}$ are asymptotically free.\\
We will in particular need the following result, let $({\bf X}_N)_{N\geq 1}$ and $({\bf Y}_N)_{N\geq 1}$ be independent sequences of hermitian random matrices. Assume that for every $N$ the distribution (in classical sense) of ${\bf X}_N$ and ${\bf Y}_N$ is unitarily invariant, moreover assume that for both matrix sequences, the corresponding empirical eigenvalue distributions converge almost surely weekly. Then sequences $({\bf X}_N)_{N\geq 1}$ and $({\bf Y}_N)_{N\geq 1}$ are almost surely asymptotically free under the states $\varphi_N=1/N Tr(\cdot)$. More precisely for any polynomial $P\in\mathbb{C}\,\langle x_1,x_2\rangle$ in two non-commuting variables we have almost surely $\lim_{N\to\infty}\varphi_N\left(P({\bf X}_N,{\bf Y}_N)\right)=\varphi(P(\X,\Y))$, where distributions of $\X$ and $\Y$ are the limits of empirical eigenvalue distributions of $({\bf X}_N)$ and $({\bf X}_N)$ respectively and $\X$ and $\Y$ are free. The proof of this fact can be found in \cite{HiaiPetz} chapter 4.
\subsection{Free cumulants} For a concrete example it is usually complicated to calculate mixed moments for free random variables. So called free cumulants defined by Speicher in \cite{SpeicherNC} give a handy combinatorial description for freeness.

Let $\chi=\{B_1,B_2,\ldots\}$ be a  partition of the set of numbers $\{1,\ldots,k\}$. A partition $\chi$ is a crossing partition if there exist distinct blocks $B_r,\,B_s\in\chi$ and numbers $i_1,i_2\in B_r$, $j_1,j_2\in B_s$ such that $i_1<j_1<i_2<j_2$. Otherwise $\chi$ is called a non-crossing partition. The set of all non-crossing partitions of $\{1,\ldots,k\}$ is denoted by $NC(k)$.

For any $k=1,2,\ldots$, (joint) free cumulants of order $k$ of non-commutative random variables $\X_1,\ldots,\X_n$ are defined recursively as $k$-linear maps $\mathcal{R}_k:\mathcal{A}^k\to\C$ through equations
$$
\varphi(\Y_1\ldots\Y_m)=\sum_{\chi\in NC(m)}\,\prod_{B\in\chi}\,\mathcal{R}_{|B|}(\Y_i,\,i\in B)
$$
holding for any $\Y_i\in\{\X_1,\ldots,\X_n\}$, $i=1,\ldots,m$, and any $m=1,2,\ldots$,
with $|B|$ denoting the size of the block $B$.

Freeness can be characterized in terms of behaviour of cumulants in the following way: consider unital subalgebras $(\mathcal{A}_i)_{i\in I}$ of an algebra $\mathcal{A}$ in a non-commutative probability space $(\mathcal{A},\,\varphi)$. Subalgebras $(\mathcal{A}_i)_{i\in I}$ are freely independent iff for any $n=2,3,\ldots$ and for any $\X_j\in\mathcal{A}_{i(j)}$ with $i(j)\in I$, $j=1,\ldots,n$ any $n$-cumulant
$$
\mathcal{R}_n(\X_1,\ldots,\X_n)=0
$$
if there exists $k,l\in\{1,\ldots,n\}$ such that $i(k)\ne i(l)$.

In the sequel we will use the following formula from \cite{BozLeinSpeich} which connects cumulants and moments for non-commutative random variables
\begin{align}\label{BLS}
\varphi(\X_1\ldots\X_n)=\sum_{k=1}^n\,\sum_{1<i_2<\ldots<i_k\le n}\,\mathcal{R}_k(\X_1,\X_{i_2},\ldots,\X_{i_k})\,\prod_{j=1}^k\,\varphi(\X_{i_j+1}\ldots\X_{i_{j+1}-1})
\end{align}
with $i_1=1$ and $i_{k+1}=n+1$ (empty products are equal 1).

\subsection{Conditional expectation}
The classical notion of conditional expectation has its non-commutative counterpart in the case $(\mathcal{A},\varphi)$ is a  $W^*$-probability spaces, that is $\mathcal{A}$ is a von Neumann algebra. We say that  $\mathcal{A}$ is a von Neumann algebra if it is a weakly closed unital $*$-subalgebra of $B(\mathcal{H})$, where $\mathcal{H}$ is a Hilbert space.\\ 
If $\mathcal{B}\subset \mathcal{A}$ is a von Neumann subalgebra of the von Neumann algebra $\mathcal{A}$, then there exists a faithful normal projection from $\mathcal{A}$ onto $\mathcal{B}$, denoted by $\varphi(\cdot|\mathcal{B})$, such that $\varphi(\varphi(\cdot|\mathcal{B}))=\varphi$. This projection $\varphi(\cdot|\mathcal{B})$ is a non-commutative conditional expectation given subalgebra $\mathcal{B}$. If $\X\in \mathcal{A}$ is self-adjoint then $\varphi(\X|\mathcal{B})$ defines a unique self-adjoint element in $\mathcal{B}$. For $\X\in\mathcal{A}$ by $\varphi(\cdot|\X)$ we denote conditional expectation given von Neumann subalgebra $\mathcal{B}$ generated by $\X$ and $\I$. Non-commutative conditional expectation has many properties analogous to those of classical conditional expectation. For more details one can consult e.g. \cite{Takesaki}. Here we state two of them we need in the sequel. The proofs can be found in \cite{BoBr2006}.
\begin{lemma}\label{conexp} Consider a $W^*$-probability space $(\mathcal{A},\varphi)$.
\begin{itemize}
\item If $\X\in\mathcal{A}$ and $\Y\in\mathcal{B}$, where $\mathcal{B}$ is a von Neumann subalgebra of $\mathcal{A}$, then
\begin{align}\label{ce1}
\varphi(\X\,\Y)=\varphi(\varphi(\X|\mathcal{B})\,\Y).
\end{align}
\item If $\X,\,\Z\in\mathcal{A}$ are freely independent then
\begin{align}\label{ce2}
\varphi(\X|\Z)=\varphi(\X)\,\mathbb{I}.
\end{align}
\end{itemize}
\end{lemma}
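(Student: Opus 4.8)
The plan is to prove the two assertions separately, deriving the first from the module property of the conditional expectation and the second from a centering argument combined with the uniqueness furnished by faithfulness. For the first identity, write $E:=\varphi(\cdot\,|\mathcal{B})$; by assumption $E$ is a faithful normal projection of $\mathcal{A}$ onto $\mathcal{B}$ satisfying $\varphi\circ E=\varphi$. A norm-one projection onto a $*$-subalgebra is automatically a $\mathcal{B}$-bimodule map (Tomiyama's theorem), so for $\Y\in\mathcal{B}$ one has $E(\X\,\Y)=E(\X)\,\Y$. Applying $\varphi$ and using $\varphi\circ E=\varphi$ then gives
\begin{align*}
\varphi(\X\,\Y)=\varphi\bigl(E(\X\,\Y)\bigr)=\varphi\bigl(E(\X)\,\Y\bigr)=\varphi\bigl(\varphi(\X\,|\mathcal{B})\,\Y\bigr),
\end{align*}
which is the claim.

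For the second identity, let $\mathcal{B}$ be the von Neumann algebra generated by $\Z$ and $\mathbb{I}$, and set $W:=\varphi(\X)\,\mathbb{I}\in\mathcal{B}$. The first part shows that $E(\X)$ is characterised among elements of $\mathcal{B}$ by the relation $\varphi(E(\X)\,\Y)=\varphi(\X\,\Y)$ for all $\Y\in\mathcal{B}$, and this characterisation is unique because $\varphi$ is faithful: if $W_1,W_2\in\mathcal{B}$ both satisfy it, then $\varphi\bigl((W_1-W_2)\,\Y\bigr)=0$ for every $\Y\in\mathcal{B}$, and taking $\Y=(W_1-W_2)^{*}$ forces $W_1=W_2$. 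It therefore suffices to verify that $W$ satisfies the same relation, i.e.\ that $\varphi(\X\,\Y)=\varphi(\X)\,\varphi(\Y)$ for all $\Y\in\mathcal{B}$. I would first check this for $\Y$ a polynomial in $\Z$: centering $\X_0:=\X-\varphi(\X)\mathbb{I}$ and $\Y_0:=\Y-\varphi(\Y)\mathbb{I}$, the cross terms vanish since $\varphi(\X_0)=\varphi(\Y_0)=0$, leaving
\begin{align*}
\varphi(\X\,\Y)=\varphi(\X_0\,\Y_0)+\varphi(\X)\,\varphi(\Y).
\end{align*}
Now $\X_0$ lies in the algebra generated by $\X$ and $\Y_0$ in the algebra generated by $\Z$, both are centered, and these algebras are free, so the defining property of freeness applied to the two alternating centered factors gives $\varphi(\X_0\,\Y_0)=0$. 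To pass to a general $\Y\in\mathcal{B}$ I would use that the polynomials in $\Z$ are weakly dense in $\mathcal{B}$ and that both sides are weakly continuous in $\Y$ by normality of $\varphi$, so the identity extends to all of $\mathcal{B}$ by a Kaplansky density argument. Combined with the uniqueness above this yields $\varphi(\X\,|\Z)=\varphi(\X)\,\mathbb{I}$.

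The freeness cancellation $\varphi(\X_0\,\Y_0)=0$ is immediate from the definition for two factors and is the routine step. The main obstacle is the functional-analytic bookkeeping: invoking Tomiyama's theorem for the bimodule property in the first part, and in the second part passing correctly from the algebraic identity on polynomials in $\Z$ to the weak closure $\mathcal{B}$ via normality, while using faithfulness to pin down the conditional expectation uniquely.
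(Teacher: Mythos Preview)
Your argument is correct and is essentially the standard one. Note, however, that the paper does not actually prove this lemma: it only states the two properties and refers the reader to \cite{BoBr2006} for the proofs. So there is no ``paper's own proof'' to compare against beyond that citation; what you have written is precisely the kind of argument one finds in the cited source, with Tomiyama's theorem supplying the bimodule identity for the first part and the centering-plus-faithfulness argument for the second.
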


\subsection{Functional calculus in $C^*$-algebras}
We also will use functional calculus in $C^*$-algebras, for the details we refer to chapter 1 in \cite{Takesaki} or chapter 5 in \cite{AndersonGuionnetZeitouniIntro}. We say that $\mathcal{A}$ is a $C^*$-algebra if it is a Banach space equipped with the involution $*:\mathcal{A}\to\mathcal{A}$ satisfying the following:
\begin{itemize}
	\item $\norm{\X\Y}\leq\norm{\X}\norm{\Y}$
	\item $\norm{\X^*}=\norm{\X}$
	\item $\norm{\X\X^*}=\norm{\X}^2$.
\end{itemize}  
The functional calculus in $C^{*}$-algebras says that for any normal element $\X\in\mathcal{A}$ and any function $f$ continuous on $sp(\X)$ we can define a homomorphism $\pi: C(sp(\X))\to C^{*}(\I,\X)$, where $C^{*}(\I,\X)$ is the $C^*$ algebra generated by $\X$ and $\I$. This homomorphism is such that $\pi(id)=\X$ and $\norm{f(\X)}=\norm{f}_\infty$. For any polynomial $p(z,\bar{z})$ we have $\pi(p)=p\left(\X,\X^{*}\right)$ and for $f(x)=1/x$ we have $\pi(f)=\X^{-1}$.\\
For a $C^*$-probability space $(\mathcal{A},\varphi)$, i.e. $*$-probability space where $\mathcal{A}$ is a $C^{*}$-algebra we have $|\varphi(\X)|\leq \norm{\X}$.\\
For fixed $N$ if we consider $N\times N$ matrices then they form a $C^*$--algebra with $\norm{\cdot}$ being the spectral radius.\\
Moreover if we define $\norm{\bf X}_{p}=\left(\sum_{k=1}^n \sigma_k^p\right)^{1/p}$, where $\sigma_1,\ldots,\sigma_n$ are singular values of ${\bf X}$, then we have (see \cite{AndersonGuionnetZeitouniIntro} Appendix A)
\begin{align}
\label{inq_matr}
\norm{\bf X Y}_r\leq \norm{\bf X }_p \norm{\bf X Y}_q
\end{align}
for $1/p+1/q=1/r$ where $1\leq p,q,r\leq\infty$.\\
We also have 
\begin{align}
\label{inq_tr}
\left|\frac{1}{N}Tr(\bf X )\right|\leq \norm{\bf X }_1.
\end{align}

\subsection{Analytic tools}
Now we introduce basic analytical tools used to deal with non-commutative random variables and their distributions.

For a non-commutative random variable $\X$ its $r$-transform is defined as
\begin{align}\label{rtr}
r_{\X}(z)=\sum_{n=0}^{\infty}\,\mathcal{R}_{n+1}(\X)\,z^n.
\end{align}
In \cite{VoiculescuAdd} it is proved that $r$-transform of a random variable with compact support is analytic in a neighbourhood of zero.  From properties of cumulants it is immediate that for $\X$ and $\Y$ which are freely independent
\begin{align}\label{freeconv}
r_{\X+\Y}=r_{\X}+r_{\Y}.
\end{align}
This relation explicitly (in the sense of $r$-transform) defines free convolution of $\X$ and $\Y$.
If $\X$ has the distribution $\mu$, then often we will write $r_{\mu}$ instead of $r_{\X}$.

The Cauchy transform of a probability measure $\nu$ is defined as
$$
G_{\nu}(z)=\int_{\R}\,\frac{\nu(dx)}{z-x},\qquad \Im(z)>0.
$$
To get the measure form the Cauchy transform one can use the Stieltjes inversion formula
\begin{align*}
d\nu(t)=-\frac{1}{\pi}\lim_{\varepsilon\to 0^+}\Im G_\nu(t+i\varepsilon)
\end{align*}
The Cauchy transforms and $r$-transforms are related by
\begin{align}\label{Crr}
G_{\nu}\left(r_{\nu}(z)+\frac{1}{z}\right)=z,
\end{align}
for $z$ in neighbourhood of $0,$ and
\begin{align}\label{Crr2}
r_{\nu}(G_{\nu}(z))+\frac{1}{G_{\nu}(z)}=z,
\end{align}
for $z$ in a neighbourhood of infinity.

Finally we introduce the moment transform $M_{\X}$ of a random variable $\X$,
\begin{align}\label{mgf}
M_{\X}(z)=\sum_{n=1}^{\infty}\,\varphi(\X^n)\,z^n.
\end{align}
We will need the following lemma proved in \cite{SzpDLRNeg}.
\begin{lemma}
	\label{lem_cum}
	Let $\V$ be compactly supported, invertible non-commutative random variable. Define $C_n=\mathcal{R}_{n}\left(\V^{-1},\V,\ldots,\V\right)$, and $C(z)=\sum_{i=1}^{\infty}C_iz^{i-1}$. Then for $z$ in neighbourhood of $0$ we have
	\be 
	\label{lem_1}
	C(z)=\frac{z+C_1}{1+zr(z)},
	\ee
	where $r(z)$ is $R$-transform of $\V$. In particular,
	\be 
	\label{lem_2}
	C_2=1-C_1\mathcal{R}_1(\V),\ \ C_n=-\sum_{i=1}^{n-1}C_i\mathcal{R}_{n-i}(\V),\, n\geq 2
	\ee
\end{lemma}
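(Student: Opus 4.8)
The plan is to reduce the closed form \eqref{lem_1} to a single generating-function identity and then read off the recurrences \eqref{lem_2} by comparing coefficients; note that \eqref{lem_1} is equivalent to $(1+zr(z))\,C(z)=z+C_1$, whose coefficients of $z^{0}$, $z^{1}$ and $z^{n-1}$ (for $n\ge3$) are precisely the relations in \eqref{lem_2}. Since $\V$ is compactly supported, its moments grow at most geometrically, so $r$, the moment series, and $C$ are all analytic in a neighbourhood of $0$, and every manipulation below is legitimate there.

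First I would extract the combinatorial content from \eqref{BLS}. Writing $m_\ell=\varphi(\V^\ell)$ (with $m_0=1$) and applying \eqref{BLS} to the word $\X_1=\V^{-1}$, $\X_2=\dots=\X_n=\V$, the point is that the distinguished argument $\X_1=\V^{-1}$ always sits inside the leading cumulant, so that cumulant is exactly $\mathcal{R}_k(\V^{-1},\V,\dots,\V)=C_k$, while each remaining factor $\varphi(\X_{i_j+1}\dots\X_{i_{j+1}-1})$ is a moment $m_{\ell_j}$ of a consecutive run of $\V$'s, and the run lengths satisfy $\ell_1+\dots+\ell_k=n-k$. Conversely each such composition determines the indices $i_2<\dots<i_k$, so that, using $\varphi(\V^{-1}\V^{n-1})=\varphi(\V^{n-2})$, one obtains for every $n\ge1$
\begin{equation*}
\varphi(\V^{-1}\V^{n-1})=\sum_{k=1}^{n}C_k\sum_{\ell_1+\dots+\ell_k=n-k}\;\prod_{j=1}^{k}m_{\ell_j}.
\end{equation*}
Introducing $F(w)=\sum_{\ell\ge0}m_\ell w^\ell$ and summing this against $w^n$ over $n\ge1$, the inner sum is the coefficient of $w^{n-k}$ in $F(w)^k$, so the right-hand side collapses to $\sum_{k\ge1}C_k\,(wF(w))^k$, while the left-hand side sums to $C_1w+w^2F(w)$. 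Writing $\sum_{k\ge1}C_ku^k=u\,C(u)$ and substituting $u=wF(w)$ (a valid change of variable near $0$, since $wF(w)=w+O(w^2)$), then dividing by $w$, yields
\begin{equation*}
F(w)\,C\!\left(wF(w)\right)=C_1+wF(w).
\end{equation*}

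It remains to identify $F(w)$ with $1+zr(z)$ at $z=wF(w)$, and this is the step I expect to be the crux, since it is what converts a purely combinatorial identity into the stated $r$-transform form. Here I would use that the Cauchy transform of the distribution of $\V$ satisfies $G_\V(1/w)=\sum_{\ell\ge0}m_\ell w^{\ell+1}=wF(w)$; hence, putting $z=wF(w)=G_\V(1/w)$ and reading \eqref{Crr2} with argument $1/w$ (which lies near infinity when $w$ is near $0$), one gets $r(z)+1/z=1/w$, so that $zr(z)=z/w-1=F(w)-1$ and therefore $1+zr(z)=F(w)$. Substituting this into the previous display gives $C(z)=\dfrac{z+C_1}{1+zr(z)}$, which is \eqref{lem_1}.

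Finally, clearing the denominator to get $(1+zr(z))\,C(z)=z+C_1$ and using $zr(z)=\sum_{k\ge1}\mathcal{R}_k(\V)z^{k}$ from \eqref{rtr}, I would compare coefficients: the coefficient of $z^{0}$ gives the trivial identity $C_1=C_1$, the coefficient of $z^{1}$ gives $C_2+C_1\mathcal{R}_1(\V)=1$, i.e. $C_2=1-C_1\mathcal{R}_1(\V)$, and the coefficient of $z^{n-1}$ for $n\ge3$ gives $C_n+\sum_{i=1}^{n-1}C_i\mathcal{R}_{n-i}(\V)=0$, i.e. $C_n=-\sum_{i=1}^{n-1}C_i\mathcal{R}_{n-i}(\V)$, which is \eqref{lem_2}. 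The only genuinely delicate point throughout is the analytic identification in the third paragraph; the combinatorics and coefficient extraction are routine bookkeeping once the right generating variable $wF(w)$ is chosen.
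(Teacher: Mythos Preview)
The paper does not actually prove this lemma; it is quoted from \cite{SzpDLRNeg} without argument. Your proof is correct and self-contained: the expansion of $\varphi(\V^{-1}\V^{n-1})$ via \eqref{BLS}, the passage to generating functions, and the identification $F(w)=1+zr(z)$ through $z=wF(w)=G_\V(1/w)$ together with \eqref{Crr2} are all valid, and the coefficient extraction for \eqref{lem_2} is routine. The combinatorial step you perform is exactly the manipulation the paper itself carries out later (with $A$ in place of your $F$) in the proof of Theorem~\ref{main} to obtain \eqref{fB}, so your approach is fully in the spirit of the paper's techniques; you have simply specialised that computation to the case where the ambient word consists only of $\V$'s, so that the left-hand side collapses to moments of $\V$ and the identity can be closed via the $r$-transform.
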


\subsection{Marchenko--Pastur distribution}
A non-commutative random variable $\X$ is said to be free-Poisson variable if it has Marchenko--Pastur (or free-Poisson) distribution $\nu=\nu(\lambda,\gamma)$ defined by the formula
\begin{align*}
\nu=\max\{0,\,1-\lambda\}\,\delta_0+\lambda \tilde{\nu},
\end{align*}
where $\lambda\ge 0$ and the measure $\tilde{\nu}$, supported on the interval $(\gamma(1-\sqrt{\lambda})^2,\,\gamma(1+\sqrt{\lambda})^2)$, $\gamma>0$, has the density (with respect to the Lebesgue measure)
$$
\tilde{\nu}(dx)=\frac{1}{2\pi\gamma x}\,\sqrt{4\lambda\gamma^2-(x-\gamma(1+\lambda))^2}\,dx.
$$
The parameters $\lambda$ and $\gamma$ are called the rate and the jump size, respectively.

It is easy to see that if $\X$ is free-Poisson, $\nu(\lambda,\alpha)$, then $\mathcal{R}_n(\X)=\alpha^n\lambda$, $n=1,2,\ldots$. Therefore its $r$-transform has the form
$$
r_{\nu(\lambda,\alpha)}(z)=\frac{\lambda\alpha}{1-\alpha z}.
$$
\subsection{Free GIG distribution}
The measure which plays the role of the GIG distribution in the free probability analogue of the Matsumoto--Yor property turns out to be known as an almost sure weak limit of empirical eigenvalue distribution of GIG matrices (see \cite{Feral}). We will refer to this distribution as a free Generalized Inverse Gaussian $(fGIG)$ distribution and we adopt the definition from \cite{Feral}.
\begin{definition}
By the free Generalized Inverse Gaussian distribution we understand the measure $\mu=\mu(\lambda,\alpha,\beta)$, where $\lambda\in\mathbb{R}$ and $\alpha,\beta>0$ which is compactly supported on the interval $[a,b]$ with the density
\begin{align*}
\mu(dx)=\frac{1}{2\pi}\sqrt{(x-a)(b-x)} \left(\frac{\alpha}{x}+\frac{\beta}{\sqrt{ab}x^2}\right)dx,
\end{align*}
where $0<a<b$ are solution of
\begin{align}
\label{GIGcoef1}
1-\lambda+\alpha\sqrt{ab}-\beta\frac{a+b}{2ab}=&0\\
\label{GIGcoef2}
1+\lambda+\frac{\beta}{\sqrt{ab}}-\alpha\frac{a+b}{2}=&0.
\end{align}
\end{definition}
The Cauchy transform of a $fGIG$ distribution was also calculated in \cite{Feral}, for $\X$ distributed $\mu(\lambda,\alpha,\beta)$ its Cauchy transform is given by the formula
\begin{align*}
G_\X(z)=\frac{\alpha z^2-(\lambda-1)z-\beta-\left(\alpha z-\frac{\beta}{\sqrt{ab}}\right)\sqrt{(z-a)(z-b)}}{2z^2}.
\end{align*}
where $a,b$ are as in the definition.\\
The next lemma gives the $R$-transform of the free GIG distribution.
\begin{lemma}
Let $\X$ have $fGIG$ distribution $\mu(\lambda,\alpha,\beta)$ then
\begin{align}
\label{RtrGIG}
r_\X(z)=\frac{-\alpha+z(\lambda+1)+\sqrt{(\alpha+z(\lambda-1))^2-4\beta z(z-\alpha)(z-\gamma)}}{2z(\alpha-z)},
\end{align}
where
\begin{align}
\label{constC}
\gamma=\frac{\alpha^2 a b+\frac{\beta^2}{ab}-2\alpha\beta\left(\frac{a+b}{\sqrt{ab}}-1\right)-(\lambda-1)^2}{4\beta}.
\end{align}
\end{lemma}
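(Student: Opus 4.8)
The plan is to recover $r_\X$ from the explicit Cauchy transform $G_\X$ recorded above, using the inversion relation \eqref{Crr2}. That relation says that if $K_\X=G_\X^{-1}$ is the inverse of the Cauchy transform in a neighbourhood of infinity, then $r_\X(w)=K_\X(w)-1/w$; so the entire problem reduces to solving $w=G_\X(z)$ for $z$ in closed form and subtracting $1/w$.

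To invert $G_\X$, I would first clear the denominator and isolate the radical, rewriting $w=G_\X(z)$ as
\[
\left(\alpha z-\frac{\beta}{\sqrt{ab}}\right)\sqrt{(z-a)(z-b)}=\alpha z^2-(\lambda-1)z-\beta-2wz^2 ,
\]
and then square to remove the square root. Writing $A(z)=\alpha z^2-(\lambda-1)z-\beta$ and $Q(z)=\alpha z-\beta/\sqrt{ab}$, the squared identity reads $(2wz^2-A)^2=Q^2(z-a)(z-b)$. The crucial algebraic input is the polynomial identity
\[
A(z)^2-Q(z)^2(z-a)(z-b)=4\alpha z^3-4\beta\gamma z^2 ,
\]
which I expect to establish by expanding the left-hand side and substituting the two defining relations \eqref{GIGcoef1}--\eqref{GIGcoef2}: they force the $z^3$-coefficient to be $4\alpha$ and the $z^1$-coefficient to vanish, while the $z^2$-coefficient reduces to exactly $-4\beta\gamma$ with $\gamma$ as in \eqref{constC}. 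In practice this is the computation that \emph{produces} the constant \eqref{constC}.

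Granting this identity, the squared equation becomes $4w^2z^4-4wAz^2+A^2=A^2-4\alpha z^3+4\beta\gamma z^2$; cancelling $A^2$, dividing by $4z^2$, and regrouping in powers of $z$ leaves the single quadratic
\[
w(\alpha-w)z^2-(\alpha+w(\lambda-1))z-\beta(w-\gamma)=0 .
\]
Its discriminant is precisely $(\alpha+w(\lambda-1))^2-4\beta w(w-\alpha)(w-\gamma)$, so the quadratic formula gives $K_\X(w)=z$ as a ratio with denominator $2w(\alpha-w)$, and then $r_\X(w)=K_\X(w)-1/w$ simplifies, after combining the two fractions, to exactly \eqref{RtrGIG}.

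The step I expect to require the most care is the branch selection rather than the identity, which is mere bookkeeping. Squaring introduces the conjugate root $\widetilde G(z)=(A+Q\sqrt{(z-a)(z-b)})/(2z^2)$, so the quadratic has two roots in $z$ and one must select the one with $K_\X(w)\sim 1/w$ as $w\to0$, matching the normalization $G_\X(z)\sim 1/z$ at infinity. A short expansion near $w=0$ shows this selects the $+$ sign of the square root in \eqref{RtrGIG} and, as a consistency check, yields $r_\X(0)=\mathcal R_1(\X)=(\lambda-\beta\gamma)/\alpha$ for the first cumulant. Confirming that the chosen branch stays analytic throughout the domain where the $r$-transform is defined is the only point that goes slightly beyond routine calculation.
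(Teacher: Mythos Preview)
Your approach is essentially identical to the paper's: both invert the explicit Cauchy transform via the relation $K_\X=G_\X^{-1}$, isolate and square out the radical, use the defining relations \eqref{GIGcoef1}--\eqref{GIGcoef2} to reduce the resulting quartic to a quadratic in $K_\X$, solve, and select the branch analytic at the origin. The only cosmetic difference is that you package the simplification as the single polynomial identity $A^2-Q^2(z-a)(z-b)=4\alpha z^3-4\beta\gamma z^2$, whereas the paper compares coefficients of $K_\X^i$ one at a time; these are the same computation, and both yield the constant $\gamma$ in \eqref{constC} from the $z^2$-coefficient.
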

\begin{proof}
First we define $K_\X(z)=r_\X(z)+1/z$, then by equation \eqref{Crr}, we have
\begin{align*}
\frac{\alpha K_\X^2(z)-(\lambda-1)K_\X(z)-\beta-\left(\alpha K_\X(z)+\frac{\beta}{\sqrt{ab}}\right)\sqrt{(K_\X(z)-a)(K_\X(z)-b)}}{2K_\X^2(z)}=z.
\end{align*}
After simple transformations and taking square of both sides we get
\begin{align}
\label{inv1}
\left(\alpha K_\X(z)-\frac{\beta}{\sqrt{ab}}\right)^2(K_\X(z)-a)(K_\X(z)-b)=\left((\alpha-2z) K_\X^2(z)-(\lambda-1)K_\X(z)-\beta\right)^2.
\end{align}
Next we expand booth sides and compare coefficients of the $K_\X^i$ for $i=0,1,\ldots,4$. We see that the coefficient of constant on booth sides are equal to $\beta^2$. The coefficients of $K_\X(z)$ are equal on both sides of the equation if
\begin{align*}
2\beta(\lambda-1)=2\alpha\beta\sqrt{ab}-\beta^2\left(\frac{a+b}{ab}\right),
\end{align*}
which is equivalent to \eqref{GIGcoef1}.\\
The above observation and equations \eqref{GIGcoef1} and \eqref{GIGcoef2} allow us to writhe equation \eqref{inv1} as
\begin{align*}
&(\alpha^2-(\alpha-2z)^2) K_\X^4(z)-4(\alpha+z(\lambda-1)) K_\X^3(z)+\\&\left(2\beta  (\alpha-2z)+ \alpha^2ab+\frac{\beta^2}{ab}-2\alpha\beta\left(\frac{a+b}{\sqrt{ab}}\right)-(\lambda-1)^2\right) K_\X^2(z)=0
\end{align*}
Solving the above equation for $K_\X(z)$ we see that $K_\X(z)=0$ is a double root, but in such case we would have $r_\X(z)=-1/z$ which is impossible for a compactly supported random variable. The remaining roots are
\begin{align*}
K_\X^\pm(z)=\frac{(\alpha+z(\lambda-1))\pm\sqrt{(\alpha+z(\lambda-1))^2-4\beta z(z-\alpha)(z-\gamma)}}{2z(\alpha-z)}.
\end{align*}
where $\gamma$ is as in \eqref{constC}.\\
This means that the $R$-transform is of the following form
\begin{align*}
r_\X(z)=\frac{-\alpha+z(\lambda+1)+\sqrt{(\alpha+z(\lambda-1))^2-4\beta z(z-\alpha)(z-\gamma)}}{2z(\alpha-z)},
\end{align*}
and we choose this branch of the square root for which $r$ is analytic at the origin.
\end{proof}
The following elementary properties of the free GIG and free Poisson distributions will be useful for us. Note that this are analogues of known properties of classical GIG and Gamma distributions (see \cite{LetacWesMY}).
\begin{remark}
\label{GIGPoissConv}
Let $\X$ and $\Y$ be free, $\X$ free GIG distributed $\mu(-\lambda,\alpha,\beta)$ and $\Y$ free Poisson distributed $\nu(1/\alpha,\lambda)$ respectively, for $\alpha,\beta,\lambda>0$ then $\X+\Y$ is free GIG distributed $\mu(\lambda,\alpha,\beta)$. 
\end{remark}
\begin{proof}
It follows from a straightforward calculation on the $R$-transforms. If we consider the function under the square root in \eqref{RtrGIG} as a polynomial in $\lambda$ (taking into account the definition of the constant $\gamma$) then the linear part in $\lambda$ cancels, so it is an even function of $\lambda$.
\end{proof}
\begin{remark}
If $\X$ has the free GIG distribution $\mu(\lambda,\alpha,\beta)$ then $\X^{-1}$ has the free GIG distribution $\mu(-\lambda,\beta,\alpha)$.
\end{remark}
\begin{proof}
This follows from elementary calculation on the density of the free GIG distribution. 
\end{proof}
\begin{remark}
In the case $\beta=0$ and $\alpha>0$ the fGIG distribution is a Marchenko--Pastur distribution $\nu(1/\alpha,\lambda)$. When $\alpha=0$ and $\beta>0$, then fGIG distribution is the distribution of an inverse of a Marchenko--Pastur distribution.
\end{remark}
\section{The free Matsumoto--Yor property}
In the paper \cite{MatsumotoYor} authors noted an interesting independence property, namely if $X$ and $Y$ are independent random variables distributed $GIG(-p,a,b)$ and $G(p,a)$ respectively for $p,a,b>0$, then random variables 
\begin{align*}
U=\frac{1}{X+Y}\qquad V=\frac{1}{X}-\frac{1}{X+Y}
\end{align*}
are independent, $U$ is distributed $GIG(p,b,a)$ and $V$ is distributed $G(p,b)$. \\
Later on in \cite{LetacWesMY} the following theorem which characterizes the distributions GIG and Gamma by the Matsumoto--Yor property was proved.
\begin{theorem}
	\label{MYproperty}
	Let $X$ and $Y$ be real, positive, independent, non-degenerated random variables. If $U=\frac{1}{X+Y}$ and $V=\frac{1}{X}-\frac{1}{X+Y}$ are independent then $Y$ has the gamma distribution with parameters $(p,a)$ and $X$ has the GIG distribution with parameters $(-p,a,b)$.
\end{theorem}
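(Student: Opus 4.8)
The plan is to exploit the fact that the map $T(x,y)=\bigl(1/(x+y),\,1/x-1/(x+y)\bigr)$ is an involution. Writing $u=1/(x+y)$ and $v=1/x-1/(x+y)=y/\bigl(x(x+y)\bigr)$, one checks directly that $x=1/(u+v)$ and $y=1/u-1/(u+v)$, so that $T\circ T=\mathrm{id}$, and that the Jacobian of $(u,v)\mapsto(x,y)$ equals $1/\bigl(u^2(u+v)^2\bigr)$ in absolute value. Assuming for the moment that $X$ and $Y$ possess densities $h$ and $g$, the independence of $U$ and $V$ together with the change of variables yields, for densities $k$ of $U$ and $\ell$ of $V$,
\begin{align*}
k(u)\,\ell(v)=h\!\left(\tfrac{1}{u+v}\right)g\!\left(\tfrac{v}{u(u+v)}\right)\frac{1}{u^2(u+v)^2}.
\end{align*}
After taking logarithms the left-hand side separates additively in $u$ and $v$, so this is a Pexider-type functional equation for the four unknown densities.

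First I would take logarithms and set $a=\log h$, $b=\log g$. Applying the operator $\partial^2/\partial u\,\partial v$ annihilates the separable term $\log k(u)+\log\ell(v)$, leaving a second-order relation among $a',a'',b',b''$. Substituting back $x=1/(u+v)$ and $y=v/\bigl(u(u+v)\bigr)$ and clearing denominators, the mixed-derivative identity collapses to the clean functional equation
\begin{align*}
x^2a''(x)+2x\,a'(x)+2=2x\bigl(y\,b''(y)+b'(y)\bigr)+y^2b''(y),
\end{align*}
valid on an open subset of $(0,\infty)^2$ determined by the supports of $U$ and $V$.

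Next I would solve this by separation of variables. Differentiating in $y$ makes the left-hand side vanish and the right-hand side factor as $(2x+y)\bigl(y\,b'''(y)+2b''(y)\bigr)$; since $2x+y>0$ this forces $y\,b'''(y)+2b''(y)=0$, whence $b''(y)=C/y^2$, $b'(y)=-C/y+D$, and $b(y)=-C\log y+Dy+\mathrm{const}$, i.e.\ $g$ is the Gamma density $G(p,a)$ with $C=-(p-1)$ and $D=-a$. Substituting $y\,b''+b'=D$ and $y^2b''=C$ then reduces the $x$-side to the Euler equation $\bigl(x^2a'(x)\bigr)'=2Dx+(C-2)$, so that $a'(x)=D+(C-2)/x+F/x^2$ and $h(x)\propto x^{C-2}e^{Dx-F/x}$. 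Matching $C-2=-p-1$, $D=-a$ and $F=b$ identifies $h$ as the $GIG(-p,a,b)$ density, exactly the asserted conclusion; positivity and integrability of the densities force $p,a,b>0$.

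The main obstacle is not the algebra above but the regularity I quietly assumed: a priori $X$ and $Y$ are only non-degenerate and positive, so one must first establish that they admit densities smooth enough to justify the two differentiations. I expect this to be the delicate part, to be handled either by a bootstrapping argument—the multiplicative identity writes $h(x)g(y)$ as a product of $k$, $\ell$ and a smooth Jacobian factor, which upgrades measurability to continuity and then to the required differentiability—or by working with the integrated Pexider equation and invoking the theory of its measurable solutions, which are automatically of the needed analytic form. One must also verify that the supports of $U$ and $V$ are genuine intervals (guaranteed by non-degeneracy) so that the region on which the functional equation holds is large enough to legitimize the separation-of-variables step.
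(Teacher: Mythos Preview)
The paper does not prove this theorem. Theorem~\ref{MYproperty} is stated as background: it is the classical Letac--Weso\l{}owski characterization, and the paper simply attributes it to \cite{LetacWesMY} without giving any argument. The paper's own contributions concern the \emph{free} analogue (Theorems~\ref{freeMYproperty} and~\ref{main}), and those are proved by completely different methods---random-matrix asymptotics for the direct property, and an $R$-transform/moment-generating-function computation for the regression characterization. So there is no ``paper's own proof'' of Theorem~\ref{MYproperty} to compare your proposal against.

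As to your sketch on its own merits: the involution observation and the Jacobian are correct, and the differentiate-twice-then-separate strategy is indeed how such density-level characterizations are often attacked. You are right that the genuine content lies in the regularity step; the original Letac--Weso\l{}owski argument does not assume densities a priori but works instead with Laplace transforms, deriving a functional equation for the transforms that can be solved without any smoothness hypothesis on the laws. Your bootstrapping idea (``the product identity upgrades measurability to smoothness'') is plausible in spirit but would require real work to make rigorous---in particular, you would first need to know that $X$ and $Y$ are absolutely continuous at all, which is not given. If you want to complete this line, the cleanest route is probably to bypass densities entirely and mimic the transform approach of \cite{LetacWesMY}.
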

In the same paper the authors consider the Matsumoto--Yor property on real, symmetric, positive definite matrices. For our purposes more appropriate are complex Hermitian positive definite matrices, the Matsumoto--Yor property on general symmetric cones, in particular on Hermitian positive definite matrices was recently considered in \cite{KoloMY}. To state the Matsumoto--Yor property on the cone of Hermitian positive definite matrices we need to define the Wishart and GIG distribution on this cone. In the following by the $\mathcal{H}_N^+$, we will denote the cone of Hermitian positive definite matrices of size $N\times N$, where $N\geq 1$.
\begin{definition}
The GIG distribution with parameters ${\bf A},{\bf  B}\in \mathcal{H}_N^+$ and $\lambda\in \mathbb{R}$ is defined by the density
\begin{align*}
f({\bf x})=\frac{\det({\bf x})^{\lambda-N}}{K_N({\bf A},{\bf B},\lambda)}e^{-\Tr\left({\bf A} {\bf x}+{\bf B} {\bf x^{-1}}\right)}\mathit{1}_{\mathcal{H}_N^+}({\bf x}).
\end{align*}
where $K({\bf A},B,\lambda)$ is a normalization constant. We will denote this distribution by $GIG(\lambda,A,B)$.
\end{definition}
\begin{definition}
The Wishart distribution with parameters ${\bf A}\in \mathcal{H}_N^+$, $\lambda\in\{0,1,\ldots,N-1\}\cup(N-1,+\infty)$ is defined by the Laplace transform
\begin{align*}
L({\bf \sigma})=\left(\frac{\det {\bf A}}{\det ({\bf A}+{\bf \sigma})}\right)^\lambda
\end{align*} 
for any ${\bf \sigma}$ such that ${\bf A+\sigma}$ is positive definite. We denote this distribution by $W(\lambda,{\bf A})$.
\end{definition}
In the case $\lambda>N-1$ Wishart distribution has a density given by
\begin{align*}
f({\bf x})=\frac{\det({\bf x})^{\lambda-N}}{C_N({\bf A},\lambda)}e^{-\Tr\left({\bf A} {\bf x}\right)}\mathit{1}_{\mathcal{H}_N^+}({\bf x}).
\end{align*}

The Matsumoto--Yor property for Hermitian matrices (see \cite{LetacWesMY,KoloMY}) in the form that we need for the purpose of this paper says that for any $N\geq 1$ if $\bf X$ and $\bf Y$ are two independent $N\times N$ random matrices distributed $GIG(-\lambda,\alpha {\bf I}_N,\beta {\bf I}_N)$ and $W(\lambda,\alpha {\bf I}_N)$ respectively with $\lambda\in \{0,1,\ldots,N-1\}\cup(N-1,+\infty)$ and $\alpha,\beta>0$ then random matrices $\bf U=(X+Y)^{-1}$ and $\bf V=X^{-1}-(X+Y)^{-1}$ are independent and have distributions $GIG(-\lambda,\beta {\bf I}_N,\alpha {\bf I}_N)$ and $W(\lambda,\beta \bf {I}_N)$ respectively.\\

It is well known that if we take a sequence of Wishart matrices $\left({\bf Y}_N\right)$ with parameters $\lambda_N$ and $A_N$ such that ${\bf X}_N\in\mathcal{H}_N^+$, and $\lambda_N/N\to \lambda$ and $A_N=\alpha_N {\bf I}_N$ where $\alpha_N/N\to\alpha$ then the free Poisson distribution $\nu(1/\alpha,\lambda)$ is almost sure weak limit of empirical eigenvalue distributions.

A similar result holds for GIG matrices and was proved in \cite{Feral}. Namely under the assumptions that $\lambda_N/N\to\lambda\in\mathbb{R}$, $\alpha_N/N\to\alpha>0$, $\beta_N/N\to\beta>0$, the almost sure weak limit of the sequence of empirical eigenvalue distributions of matrices $GIG(\lambda_N,\alpha_N \mathit{I}_N,\beta_N  \mathit{I})$ is the free GIG distribution $\mu(\lambda,\alpha,\beta)$.\\
The above facts together with the Matsumoto--Yor property for random matrices and asymptotic freeness for unitarily invariant matrices suggest that a free analogue of the Matsumoto--Yor property should hold for free Poisson and free GIG distributions. Before we prove that this is indeed true, we will prove the following lemma which will be used in the proof of the free Matsumoto--Yor property.

\begin{lemma}
	\label{convergence}
	Let $(\mathcal{A},\varphi)$ be a $C^{*}$-probability space, assume that there exist $\X,\Y\in\mathcal{A}$ such that $\X$ and $\Y$ are free, $\X$ has the free GIG distribution $\mu(-\lambda,\alpha,\beta)$, $\Y$ has the free Poisson distribution $\nu(\lambda,1/\alpha)$ for $\lambda>1$ and $\alpha,\beta>0$.\\
	Let $\left({\bf X}_N\right)_{N\geq 1}$ be a sequence of complex matrices, where ${\bf X}_N$ is an $N\times N$ random matrix distributed $GIG(-\lambda_N,\alpha_N {\bf {\bf I}}_N,\beta_N {\bf {\bf I}}_N)$, where $\lambda_N/N\to\lambda>1, \alpha_N/N\to\alpha>0$, $\beta_N/N\to\beta>0$. Moreover let $\left({\bf Y}_N\right)_{N\geq 1}$ be a sequence of Wishart matrices where for ${\bf Y}_N$ is an $N\times N$ matrix distributed $W(\lambda_N,\alpha_N\,{\bf I}_N)$.
	
	Then for any complex polynomial $Q\in\mathbb{C}\,\langle x_1,x_2,x_3\rangle$ in three non commuting variables and any $\varepsilon>0$ we have 
	\begin{align*}
	\P\left(\left|\frac{1}{N}  Tr\left(Q\left({\bf X}_N,{\bf Y}_N,{\bf Y}_N^{-1}\right)\right)-\varphi\left(Q\left(\X,\Y,\Y^{-1}\right)\right)\right|>\varepsilon\right)\to 0,
	\end{align*} as $N\to\infty$.
\end{lemma}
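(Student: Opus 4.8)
The plan is to reduce the statement to the almost sure asymptotic freeness of $({\bf X}_N)$ and $({\bf Y}_N)$ for \emph{polynomials}, and then to remove the inverse ${\bf Y}_N^{-1}$ by a polynomial approximation argument controlled on a high-probability event. First I would observe that both the $GIG$ and the Wishart densities depend on the matrix argument only through $\det(\cdot)$ and $\Tr(\cdot)$, hence are invariant under ${\bf x}\mapsto {\bf U}{\bf x}{\bf U}^{*}$ for unitary ${\bf U}$; thus ${\bf X}_N$ and ${\bf Y}_N$ are unitarily invariant and independent. Since the empirical eigenvalue distribution of ${\bf X}_N$ converges almost surely weakly to $\mu(-\lambda,\alpha,\beta)$ (by \cite{Feral}) and that of ${\bf Y}_N$ to the free Poisson law $\nu(\lambda,1/\alpha)$, the quoted asymptotic freeness result for unitarily invariant matrices gives that for every polynomial $P\in\C\langle x_1,x_2\rangle$ we have $\frac1N\Tr\left(P({\bf X}_N,{\bf Y}_N)\right)\to\varphi\left(P(\X,\Y)\right)$ almost surely, hence in probability, where $\X,\Y$ are free with the stated distributions.

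To handle ${\bf Y}_N^{-1}$ I would use that the hypothesis $\lambda>1$ forces the limiting free Poisson law of $\Y$ to have rate exceeding $1$, so that it carries no atom at $0$ and is supported on a compact interval $[\delta_0,M_0]\subset(0,\infty)$; consequently $\Y$ is invertible with $\norm{\Y^{-1}}\le1/\delta_0$. Fix $0<\delta<\delta_0$ and $M>M_0$. For a given accuracy $\eps'>0$, Weierstrass' theorem provides a polynomial $p$ with $\sup_{x\in[\delta,M]}\abs{1/x-p(x)}<\eps'$, and $\norm{p}_\infty$ on $[\delta,M]$ stays bounded as $\eps'\to0$. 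The key object is the good event $G_N=\{\mathrm{sp}({\bf Y}_N)\subset[\delta,M]\}\cap\{\norm{{\bf X}_N}\le M'\}$, on which $\norm{{\bf Y}_N^{-1}}\le1/\delta$ and, most importantly, $\norm{{\bf Y}_N^{-1}-p({\bf Y}_N)}=\max_{x\in\mathrm{sp}({\bf Y}_N)}\abs{1/x-p(x)}<\eps'$. I would then show $\P(G_N^{c})\to0$: for the Wishart matrix the condition $\lambda>1$ places it in the well-conditioned regime where the smallest and largest eigenvalues converge to the positive edges of the limiting support, so with probability tending to one no eigenvalue leaves $[\delta,M]$; for the $GIG$ matrix the exponentially decaying factor $e^{-\Tr(\alpha_N{\bf x})}$ in its density confines the largest eigenvalue below $M'$ with probability tending to one.

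With these pieces in place I would split $\frac1N\Tr\left(Q({\bf X}_N,{\bf Y}_N,{\bf Y}_N^{-1})\right)=\frac1N\Tr\left(Q({\bf X}_N,{\bf Y}_N,p({\bf Y}_N))\right)+R_N$. Expanding $Q$ into monomials and telescoping each occurrence of ${\bf Y}_N^{-1}$ against $p({\bf Y}_N)$, on $G_N$ every factor is bounded in operator norm by a constant depending only on $Q,\delta,M,M'$, so that $\norm{Q({\bf X}_N,{\bf Y}_N,{\bf Y}_N^{-1})-Q({\bf X}_N,{\bf Y}_N,p({\bf Y}_N))}\le C\eps'$; using $\abs{\frac1N\Tr({\bf Z})}\le\norm{{\bf Z}}$ this yields $\abs{R_N}\le C\eps'$ on $G_N$, whence $\{\abs{R_N}>C\eps'\}\subset G_N^{c}$ and $\P(\abs{R_N}>C\eps')\to0$. (Equivalently one may estimate the trace of the difference directly through the Schatten-norm H\"older inequality \eqref{inq_matr} together with \eqref{inq_tr}.) The polynomial part converges in probability to $\varphi\left(Q(\X,\Y,p(\Y))\right)$ by the first step, and since $\abs{\varphi(\cdot)}\le\norm{\cdot}$ and $\mathrm{sp}(\Y)\subset[\delta,M]$ we get $\abs{\varphi(Q(\X,\Y,p(\Y)))-\varphi(Q(\X,\Y,\Y^{-1}))}\le C\eps'$. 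Combining the three estimates and letting $\eps'\to0$ gives convergence in probability to $\varphi(Q(\X,\Y,\Y^{-1}))$.

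The main obstacle is precisely the control of the extreme eigenvalues entering $\P(G_N^{c})\to0$, and above all the smallest eigenvalue of the Wishart matrix: weak convergence of the empirical spectral distribution is by itself insufficient, and one genuinely needs the no-escape statement at the lower edge. This is exactly where the assumption $\lambda>1$ is used, namely to keep the limiting—and hence, with high probability, the empirical—spectrum of ${\bf Y}_N$ bounded away from $0$, so that ${\bf Y}_N^{-1}$ is uniformly approximable by polynomials of ${\bf Y}_N$ on its spectrum.
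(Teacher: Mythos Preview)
Your argument follows essentially the same route as the paper: restrict to a high-probability event on which the spectrum of ${\bf Y}_N$ lies in a fixed compact $[a,b]\subset(0,\infty)$ (this is where $\lambda>1$ and the edge results for Wishart matrices enter), approximate $1/y$ uniformly on $[a,b]$ by a polynomial via Stone--Weierstrass, and then split the error into three pieces $I_1+I_2+I_3$ handled respectively by a telescoping estimate on the matrix side, by asymptotic freeness of $({\bf X}_N,{\bf Y}_N)$, and by the analogous telescoping estimate in the $C^*$-algebra.

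The one point of divergence is that you additionally put $\{\norm{{\bf X}_N}\le M'\}$ into the good event in order to bound everything in operator norm, whereas the paper avoids any control of $\norm{{\bf X}_N}$ by working instead with the Schatten norms via \eqref{inq_matr}--\eqref{inq_tr}: after telescoping, each summand is estimated by $\norm{{\bf Y}_N^{-1}-R({\bf Y}_N)}_\infty$ times a Schatten-$1$ norm of a polynomial in $({\bf X}_N,{\bf Y}_N)$, and the latter is bounded in probability because its normalized trace square converges by asymptotic freeness. Your heuristic that the factor $e^{-\Tr(\alpha_N{\bf x})}$ in the GIG density ``confines the largest eigenvalue'' is plausible but would need a separate argument; since you already note the Schatten-norm alternative, the cleanest fix is simply to drop the condition on $\norm{{\bf X}_N}$ and run the estimate exactly as the paper does.
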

\begin{proof}
	We know that the almost sure week limit of the sequence of empirical spectral distributions of ${\bf Y}_N$ is the free Poisson measure $\nu(\lambda,1/\alpha)$. Moreover it is known (see \cite{BaiSilverstein} chapter 5) that the largest and the smallest eigenvalues converge almost surely to the boundaries of the support of $\nu(\lambda,1/\alpha)$, which is equal to the interval $[1/\alpha(1-\sqrt{\lambda})^2,\,1/\alpha(1+\sqrt{\lambda})^2]$. Note that since $\lambda>1$, then the support of $\nu(\lambda,1/\alpha)$ is separated from 0. Thus for given $\delta>0$ we can find $0<a<b$ such that $supp(\nu)\subset [a,b]$ and there exists $N_0$ for which we have $\P\left(\forall_{N>N_0}\, a<\lambda_1({\bf Y}_N)<\lambda_N({\bf Y}_N)<b\right)>1-\delta$, where $\lambda_1({\bf Y}_N)$ and $\lambda_N({\bf Y}_N)$ are the smallest and the largest eigenvalue of ${\bf Y}_N$ respectively.
	
	We will prove that on the set $A=\{\omega:\,\forall_{N>N_0}\, a<\lambda_1({\bf Y}_N(\omega))<\lambda_N({\bf Y}_N(\omega))<b\}$ we have
	\begin{align}
	\label{Difference}
	\left|\frac{1}{N}  Tr\left(Q\left({\bf X}_N,{\bf Y}_N,{\bf Y}_N^{-1}\right)\right)-\varphi\left(Q\left(\X,\Y,\Y^{-1}\right)\right)\right|\leq\varepsilon,
	\end{align}
	for $N$ large enough and the result will follow.
	
	On the interval $[a,b]$ the function $f(y)=1/y$ is continuous then by the Stone-Weierstrass theorem for any $\eta>0$ there exists a polynomial $R(y)$ such that $\norm{1/y-R(y)}_{\infty}<\eta$. \\
	By the functional calculus in $C^{*}$-algebras we have that $\norm{\Y^{-1}-R(\Y)}<\eta$ and on the set $A$ for all $N>N_0$ we have $\norm{{\bf Y}_N^{-1}-R({\bf Y}_N)}_\infty<\eta$. For the details we refer to the chapter 5 in \cite{AndersonGuionnetZeitouniIntro} or to the chapter 1 in \cite{Takesaki}.\\
	Now we can rewrite \eqref{Difference} as
	\begin{align*}
	&\left|\frac{1}{N}  Tr\left(Q\left({\bf X}_N,{\bf Y}_N,{\bf Y}_N^{-1}\right)\right)-\right.\frac{1}{N}  Tr\left(Q\left({\bf X}_N,{\bf Y}_N,R({\bf Y}_N)\right)\right)\\
	&+\frac{1}{N}  Tr\left(Q\left({\bf X}_N,{\bf Y}_N,R({\bf Y}_N)\right)\right)-\varphi\left(Q\left(\X,\Y,R(\Y)\right)\right)\\
	&+\varphi\left(Q\left(\X,\Y,R(\Y)\right)\right)-\left.\varphi\left(Q\left(\X,\Y,\Y^{-1}\right)\right)\right|\\&\leq
	\left|\frac{1}{N}  Tr\left(Q\left({\bf X}_N,{\bf Y}_N,{\bf Y}_N^{-1}\right)\right)-\right.\left.\frac{1}{N}  Tr\left(Q\left({\bf X}_N,{\bf Y}_N,R({\bf Y}_N)\right)\right)\right|\\&
	+\left|\frac{1}{N}  Tr\left(Q\left({\bf X}_N,{\bf Y}_N,R({\bf Y}_N)\right)\right)\right.-\left.\varphi\left(Q\left(\X,\Y,R(\Y)\right)\right)\right|\\&
	+\left|\varphi\left(Q\left(\X,\Y,R(\Y)\right)\right)\right.-\left.\varphi\left(Q\left(\X,\Y,\Y^{-1}\right)\right)\right|=I_1+I_2+I_3
	\end{align*}
	
	Observe that by the almost sure asymptotic freeness of ${\bf X}_N$ and ${\bf Y}_N$ the term $I_2$ can be arbitrary small. Indeed, since $Q\left({\bf X}_N,{\bf Y}_N,R({\bf Y}_N)\right)$ is a polynomial in variables ${\bf X}_N$ and ${\bf Y}_N$ and $Q\left(\X,\Y,R(\Y)\right)$ is the same polynomial in $\X$ and $\Y$. Thus for $N$ large enough we have almost surely $I_2<\varepsilon/3$.

Consider now the term $I_1$, assume first that in the polynomial $Q\left({\bf X}_N,{\bf Y}_N,{\bf Y}_N^{-1}\right)$ the variable ${\bf Y}_N^{-1}$ appears only once, then using traciality we can write $Tr\left(Q\left({\bf X}_N,{\bf Y}_N,{\bf Y}_N^{-1}\right)\right)=Tr\left(P({\bf X}_N,{\bf Y}_N){\bf Y}_N^{-1}\right)$ for some polynomial $P$ in two non-commutative variables. 

Then we have

\begin{align*}
\left|\frac{1}{N}  Tr\left(Q\left({\bf X}_N,Y{\bf Y}_N,{\bf Y}_N^{-1}\right)\right)-\right.\left.\frac{1}{N}  Tr\left(Q\left({\bf X}_N,{\bf Y}_N,R({\bf Y}_N)\right)\right)\right|=\left|\frac{1}{N}  Tr\left(P({\bf X}_N,{\bf Y}_N)\left({\bf Y}_N^{-1}-R({\bf Y}_N)\right)\right)\right|
\end{align*}

Using inequalities from \eqref{inq_matr} and \eqref{inq_tr} we can write 

\begin{align*}
\left|\frac{1}{N}  Tr\left(P({\bf X}_N,{\bf Y}_N)\left({\bf Y}_N^{-1}-R({\bf Y}_N)\right)\right)\right|&\leq \norm{P({\bf X}_N,{\bf Y}_N)\left({\bf Y}_N^{-1}-R({\bf Y}_N)\right)}_1\\&\leq \norm{\left({\bf Y}_N^{-1}-R({\bf Y}_N)\right)}_{\infty}\norm{P({\bf X}_N,{\bf Y}_N)}_1,
\end{align*}
for any $\omega\in A$, hence choosing $\eta\leq\varepsilon/\left(3\norm{P({\bf X}_N,{\bf Y}_N)}_1\right)$ we have $I_2\leq\varepsilon/3$ on the set $A$.\\
Consider now the general case and assume that in polynomial $Q({\bf X}_N,{\bf Y}_n,{\bf Y}_N^{-1})$, variable ${\bf Y}_N^{-1}$ appears $n\geq 1$ times, by traciality  we may assume that $Q$ is of the form 
\begin{align*}
Q({\bf X}_N,{\bf Y}_N,{\bf Y}_N^{-1})=P_1({\bf X}_N,Y_N){\bf Y}_N^{-1}P_2({\bf X}_N,{\bf Y}_N){\bf Y}_N^{-1}\ldots P_n({\bf X}_N,{\bf Y}_N){\bf Y}_N^{-1}
\end{align*}
Then of course $Q({\bf X}_N,{\bf Y}_n,R({\bf Y}_N))$ can be written as above with ${\bf Y}_N^{-1}$ replaced by $R({\bf Y}_N)$.
To make the calculations below more legible we will denote $P_k=P_k({\bf X}_N,{\bf Y}_N)$ for $k=1,\ldots,n$.

\begin{align*}
&\left|\frac{1}{N}Tr\left(Q({\bf X}_N,{\bf Y}_n,{\bf Y}_N^{-1})-Q({\bf X}_N,{\bf Y}_n,R({\bf Y}_N))\right)\right|\\&=\left|\frac{1}{N}Tr\left(\sum_{l=1}^{n}P_1{\bf Y}_N^{-1}P_2{\bf Y}_N^{-1}\ldots P_l{\bf Y}_N^{-1}P_{l+1}R({\bf Y}_N)\ldots P_{n}R({\bf Y}_N)\right.\right.\\&\left.\left.-\sum_{l=1}^{n}P_1{\bf Y}_N^{-1}P_2{\bf Y}_N^{-1}\ldots {\bf Y}_N^{-1}P_{l-1}{\bf Y}_N^{-1}P_{l}R({\bf Y}_N)\ldots P_{n}R({\bf Y}_N)\right)\right|\\&\leq
\sum_{l=1}^{n}\left|\frac{1}{N}Tr\left(P_1{\bf Y}_N^{-1}P_2{\bf Y}_N^{-1}\ldots {\bf Y}_N^{-1}P_l({\bf Y}_N^{-1}-R({\bf Y}_N))P_{l+1}R({\bf Y}_N)\ldots P_{n}R({\bf Y}_N)\right)\right|\\
&\leq \norm{({\bf Y}_N^{-1}-R({\bf Y}_N))}_{\infty}\sum_{l=1}^{n}\norm{P_1{\bf Y}_N^{-1}P_2{\bf Y}_N^{-1}\ldots P_lP_{l+1}R({\bf Y}_N)\ldots P_{n}R({\bf Y}_N)}_{1}.
\end{align*}
Again we can make the above arbitrary small by appropriate choice of $\eta$, we can have in particular $I_2<\varepsilon/3$.

We prove similarly that $I_3<\varepsilon/3$, however we note that $I_3$ is non-random, and we do not have to restrict the calculation to the set $A$. We use basic inequalities in $C^{*}$ algebras $\left|\varphi(\Z)\right|\leq\norm{\Z}$ for any $\Z\in \mathcal{A}$ and $\norm{\Z_1\Z_2}\leq\norm{\Z_1}\norm{\Z_2}$ for any $\Z_1,\Z_2\in\mathcal{A}$. 
\end{proof}

The next theorem can be considered as a free probability analogue of the Matsumoto--Yor property. 
\begin{theorem}
	\label{freeMYproperty}
	Let $(\mathcal{A},\varphi)$ be a $C^*$- probability space.
	Let $\X$ have free GIG distribution and $\mu(-\lambda,\alpha,\beta)$ and $\Y$ have free Poisson distribution $\nu(1/\alpha,\lambda)$, with $\alpha,\beta>0$ and $\lambda>1$. If $\X$ and $\Y$ are free then 
	\begin{align*}
	\U=(\X+\Y)^{-1}\qquad\mbox{and}\qquad\V=\X^{-1}-(\X+\Y)^{-1}
	\end{align*}
	are free.
\end{theorem}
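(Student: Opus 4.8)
The plan is to deduce the statement from the Matsumoto--Yor property on $\mathcal{H}_N^+$ together with asymptotic freeness of independent unitarily invariant matrices, and then to transport the conclusion to the free setting using the moment convergence of Lemma \ref{convergence}. The point is that freeness of $\U$ and $\V$ is a statement about the mixed $\varphi$-moments of words in $(\X+\Y)^{-1}$ and $\X^{-1}$, and since $\X$ and $\Y$ are free with prescribed marginals their joint distribution is uniquely determined and coincides with the almost sure limiting joint distribution of the random matrix pair $({\bf X}_N,{\bf Y}_N)$ appearing in Lemma \ref{convergence}. Hence it suffices to verify the required moment identities in that matrix model.

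First I would introduce the matrix counterparts ${\bf U}_N=({\bf X}_N+{\bf Y}_N)^{-1}$ and ${\bf V}_N={\bf X}_N^{-1}-({\bf X}_N+{\bf Y}_N)^{-1}$. By the Matsumoto--Yor property on $\mathcal{H}_N^+$, the matrices ${\bf U}_N$ and ${\bf V}_N$ are \emph{classically independent}, with ${\bf U}_N$ distributed $GIG(-\lambda_N,\beta_N{\bf I}_N,\alpha_N{\bf I}_N)$ and ${\bf V}_N$ distributed $W(\lambda_N,\beta_N{\bf I}_N)$. Both of these laws are unitarily invariant, and their empirical eigenvalue distributions converge almost surely, by \cite{Feral} and the Marchenko--Pastur theorem, to the free GIG law $\mu(-\lambda,\beta,\alpha)$ and the free Poisson law $\nu(1/\beta,\lambda)$ respectively. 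Therefore the asymptotic freeness theorem for independent unitarily invariant matrices recalled in Section 2 applies and produces a \emph{free} pair $(\U',\V')$ such that $\frac{1}{N}\Tr\left(P({\bf U}_N,{\bf V}_N)\right)\to\varphi\left(P(\U',\V')\right)$ almost surely for every $P\in\mathbb{C}\,\langle x_1,x_2\rangle$.

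It then remains to identify this limit with $\varphi(P(\U,\V))$, and the obstruction is that ${\bf U}_N,{\bf V}_N$ are not polynomials in ${\bf X}_N,{\bf Y}_N$ but involve the inverses $({\bf X}_N+{\bf Y}_N)^{-1}$ and ${\bf X}_N^{-1}$. I would treat this exactly as in the proof of Lemma \ref{convergence}. Since $\lambda>1$, the supports of $\mu(-\lambda,\alpha,\beta)$ and of $\mu(\lambda,\alpha,\beta)$ (the limiting law of ${\bf X}_N+{\bf Y}_N$, by Remark \ref{GIGPoissConv}) lie in a common interval $[a',b']$ with $a'>0$, so the smallest eigenvalues of ${\bf X}_N$ and of ${\bf X}_N+{\bf Y}_N$ stay bounded away from $0$ with probability tending to one. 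On that event I approximate $x\mapsto 1/x$ uniformly on $[a',b']$ by a polynomial $R$ via Stone--Weierstrass, so that $\|({\bf X}_N+{\bf Y}_N)^{-1}-R({\bf X}_N+{\bf Y}_N)\|_\infty$ and $\|{\bf X}_N^{-1}-R({\bf X}_N)\|_\infty$ are small; replacing the inverses by $R$ turns $P({\bf U}_N,{\bf V}_N)$ into a genuine polynomial in ${\bf X}_N,{\bf Y}_N$, to which Lemma \ref{convergence} applies, while the $C^*$-functional calculus gives the matching approximation for $\U=(\X+\Y)^{-1}$ and $\V=\X^{-1}-(\X+\Y)^{-1}$. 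Bounding the three resulting error terms with \eqref{inq_matr} and \eqref{inq_tr} in the familiar $\varepsilon/3$ fashion yields $\varphi(P(\U,\V))=\varphi(P(\U',\V'))$ for all $P$, and since $(\U',\V')$ is free, so is $(\U,\V)$.

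The step I expect to be the main obstacle is precisely this last approximation: establishing that the spectra of ${\bf X}_N$ and ${\bf X}_N+{\bf Y}_N$ are uniformly separated from $0$ with high probability, so that $1/x$ is polynomially approximable on a fixed interval, and then bookkeeping the errors so that the matrix trace, the free quantity $\varphi(P(\U',\V'))$ coming from asymptotic freeness, and the target $\varphi(P(\U,\V))$ all agree in the limit. Everything else is an assembly of the quoted matrix Matsumoto--Yor property, Feral's identification of the limiting spectral laws, and the norm inequalities already collected in Section 2.
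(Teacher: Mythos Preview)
Your overall architecture matches the paper's: realize $(\X,\Y)$ as the almost sure limit of independent unitarily invariant matrix pairs $({\bf X}_N,{\bf Y}_N)$, invoke the matrix Matsumoto--Yor property to get independence of $({\bf U}_N,{\bf V}_N)$, use asymptotic freeness to produce a free limiting pair, and then identify that limit with $(\U,\V)$. The difference is in how the identification step is carried out.

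You work with $\U,\V$ directly and therefore need to approximate both $({\bf X}_N+{\bf Y}_N)^{-1}$ and ${\bf X}_N^{-1}$ by polynomials, which forces you to control the smallest eigenvalue of the GIG matrices ${\bf X}_N$ (and hence of ${\bf X}_N+{\bf Y}_N$). You correctly flag this as the main obstacle, and indeed the paper neither proves nor cites such a ``no outliers'' statement for GIG ensembles; only the Wishart case is quoted (from \cite{BaiSilverstein}). The paper sidesteps this entirely by passing to the \emph{inverses}: since $\mathcal{A}$ is a $C^*$-algebra, freeness of $\U,\V$ is equivalent to freeness of $\U^{-1}=\X+\Y$ and $\V^{-1}$, and Hua's identity \eqref{Hua} gives $\V^{-1}=\X+\X\Y^{-1}\X$. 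Thus every polynomial $P(\U^{-1},\V^{-1})$ is already a polynomial in $\X,\Y,\Y^{-1}$, and on the matrix side $P({\bf U}_N^{-1},{\bf V}_N^{-1})=Q({\bf X}_N,{\bf Y}_N,{\bf Y}_N^{-1})$ for a fixed $Q\in\mathbb{C}\langle x_1,x_2,x_3\rangle$. Lemma~\ref{convergence} then applies verbatim and only the spectral gap for the Wishart matrices ${\bf Y}_N$ is needed.

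So your route is not wrong, but it trades the one-line algebraic reduction via Hua's identity for an analytic input (edge behaviour of GIG matrices) that is outside the scope of what the paper establishes. Using Hua's identity to move the inverse from $\X$ and $\X+\Y$ onto $\Y$ alone is precisely the trick that makes Lemma~\ref{convergence}, as stated, sufficient.
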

\begin{proof}
	First let us note so called Hua's identity 
	\begin{align}
	\label{Hua}
	\left(\X+\X\Y^{-1}\X\right)\left(\X^{-1}-(\X+\Y)^{-1}\right)=
	\left(\X\Y^{-1}\right)(\X+\Y)\left(\X^{-1}-(\X+\Y)^{-1}\right)=\mathbb{I}.
	\end{align}
	This means that $\V^{-1}=\X+\X\Y^{-1}\X$.
	Since we assume that $\mathcal{A}$ is a $C^*$-algebra, the subalgebras generated by invertible random variables and its inverses coincide. It is enough to prove that $\U^{-1}=\X+\Y$ and $\V^{-1}=\X+\X\Y^{-1}\X$ are free.
	
	Let us take a sequence $({\bf Y}_N)_{N\geq 1}$ of $N\times N$ Wishart matrices with parameters $A_N=\alpha_N \mathit{I}_N$ and $\lambda_N$ where $\alpha_N/N\to \alpha>0$ and $\lambda_N/N\to \lambda>1$. Moreover let $({\bf X}_{N\geq 1})$ be a sequence of complex GIG matrices with parameters $A_N$ and $\lambda_N$ as above and $B_N=\beta_N\mathit{I}_N$ where $\beta_N/N\to\beta>0$. Assume additionally that the sequences $({\bf X}_N)_{N\geq 1}$ and $({\bf Y_N})_{N\geq 1}$ are independent. Then since GIG and Wishart matrices are unitarily invariant and both posses almost sure limiting eigenvalue distributions they are almost surely asymptotically free. More precisely if we define $\mathcal{A}_N$ to be the algebra of random martices of the size $N\times N$ with entries integrable for any $1\leq p<\infty$ and consider the state $\varphi_N({\bf A})=\left(1/N\right) \Tr({\bf A})$, then for any polynomial in two non-commuting variables $P\in \mathbb{C}\,\langle x_1,x_2 \rangle$ we have almost surely
	\begin{align*}
	\lim_{N\to\infty}\varphi_N(P({\bf X_N},{\bf Y_N}))=\varphi(P(\X,\Y)),
	\end{align*}
	where $\X$ and $\Y$ are as in the statement of the theorem.
	
	By the Matsumoto--Yor property for complex matrices we have 
	\begin{align*}
		{\bf U}_N=({\bf X}_N+{\bf Y}_N)^{-1}\qquad\mbox{and}\qquad{\bf V}_N={\bf X}_N^{-1}-({\bf X}_N+{\bf Y}_N)^{-1}
	\end{align*}
	are for any $N>0$ independent and distributed $GIG(-\lambda_N,\beta_N \mathit{I}_N,\alpha_N \mathit{I}_N )$ and Wishart $W(\lambda_N,\beta_N \mathit{I}_N)$. Thus they are almost surely asymptotically free, of course the pair $({\bf U}_N^{-1}$, ${\bf V}_N^{-1})$ is also asymptotically free, let us denote the limiting pair of non-commutative free random variables by $(\tilde{\U}^{-1},\tilde{\V}^{-1})$.
	
	So we have for any polynomial  $P\in \mathbb{C}\,\langle x_1,x_2 \rangle$ almost surely 
	\begin{align}
	\label{convergence1}
	\lim_{N\to\infty}\varphi_N\left(P({\bf U}_N^{-1},{\bf V}_N^{-1})\right)=\varphi\left(P\left(\widetilde{\U}^{-1},\widetilde{\V}^{-1}\right)\right).
	\end{align}
	On the other hand for some $Q\in \mathbb{C}\,\langle x_1, x_2, x_3\rangle$ we can write the left hand side of the above equation as
	\begin{align}
	\label{convergence2}
	\lim_{N\to\infty}\varphi_N\left(P\left({\bf X}_N+{\bf Y}_N,{\bf X}_N+{\bf X}_N{\bf Y}_N^{-1}{\bf X}_N\right)\right)=\lim_{N\to\infty}\varphi_N\left(Q\left({\bf X}_N,{\bf Y}_N^{-1},{\bf Y}_N\right)\right).
	\end{align}
	Note that Lemma \ref{convergence} says that $\varphi_N\left(Q\left({\bf X}_N,{\bf Y}_N^{-1},{\bf Y}_N\right)\right)$ converges in probability to $\varphi\left(Q\left(\X,\X^{-1},\Y\right)\right)$. However by \eqref{convergence1} and \eqref{convergence2} sequence $\left(\varphi_N\left(Q\left({\bf X}_N,{\bf Y}_N^{-1},{\bf Y}_N\right)\right)\right)_{N\geq 1}$ converges almost surely so we have
	\begin{align*}
	\lim_{N\to\infty}\varphi_N\left(Q\left({\bf X}_N,{\bf X}_N^{-1},{\bf Y}_N\right)\right)=&\varphi\left(Q\left(\X,\X^{-1},\Y\right)\right)
	\\=&\varphi\left(P\left(\X+\Y,\X+\X\Y^{-1}\X\Y\right)\right)=\varphi\left(P\left(\U^{-1},\V^{-1}\right)\right).
	\end{align*}
	This means that $\U^{-1}$ and $\V^{-1}$ have all joint moments equal to joint moments of $\widetilde{\U}^{-1},\widetilde{\V}^{-1}$, which are free. This essentially means that $\U^{-1}$ and $\V^{-1}$ are free and the theorem follows.
\end{proof}
\section{A characterization by the free Matsumoto - Yor property}
In this section we prove the main result of this paper which is a regression characterization of free GIG and Marchenko--Pastur distributions. It is a free probability analogue of the result from \cite{WesoloMY} where it was proved that assumption of independence of $U$ and $V$ in Theorem \ref{MYproperty} can be replaced by constancy of regressions: $\mathbb{E}\,\left(V|U\right)$, $\mathbb{E}\,\left(V^{-1}|U\right)$.\\
Our main result is a free analogue of the above result. Note that thank to Theorem \ref{freeMYproperty} and properties of conditional expectation, we know that $\X$ free GIG distributed and $\Y$ free Poisson distributed with appropriate choice of parameters satisfy assumptions of the following theorem.
\begin{theorem}
\label{main}
Let $\left(\mathcal{A},\varphi\right)$ be a non-commutative probability space. Let $\X$ and $\Y$ be self-adjoint, positive, free, compactly supported and non-degenerated random variables. Define $\U=\left(\X+\Y\right)^{-1}$ and $\V=\X^{-1}-\left(\X+\Y\right)^{-1}$. Assume that
\begin{align}
\label{asmp}
\varphi\left(\V|\U\right)=c\mathbb{I},\\
\varphi\left(\V^{-1}|\U\right)=d\mathbb{I}\nonumber,
\end{align}
for some real constants $c,d$.\\
Then $\Y$ has the free Poisson distribution $\nu(1/\alpha,\lambda)$ with parameters $\lambda=\frac{cd}{cd-1}$ and $\alpha=\frac{\delta_0}{cd-1}$, for some $\delta_0>0$, $\X$ has free GIG distribution $\mu(-\lambda,\alpha,\beta)$, where $\beta=\frac{d}{cd-1}$.
\end{theorem}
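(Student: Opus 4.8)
The plan is to convert the two regression hypotheses in \eqref{asmp} into a pair of functional equations for the $R$-transforms of $\X$ and $\Y$, and then to show that this system forces the free GIG and free Poisson forms. Set $\S=\X+\Y=\U^{-1}$; since conditional expectation is available, $\mathcal{A}$ is a $W^*$-algebra, and as $\U$ is invertible it generates the same von Neumann algebra as $\S$, so conditioning on $\U$ coincides with conditioning on $\S$. By the projection property \eqref{ce1} the hypotheses \eqref{asmp} are then equivalent to the scalar moment identities
\begin{align*}
\varphi\!\left(\V\,\S^{n}\right)=c\,\varphi\!\left(\S^{n}\right),\qquad \varphi\!\left(\V^{-1}\S^{n}\right)=d\,\varphi\!\left(\S^{n}\right),\qquad n\ge 0.
\end{align*}
Using Hua's identity \eqref{Hua} I rewrite $\V=\X^{-1}-\S^{-1}$ and $\V^{-1}=\X+\X\Y^{-1}\X$, so that the two families become statements about the mixed moments $\varphi(\X^{-1}\S^{n})$, $\varphi(\X\,\S^{n})$ and $\varphi(\X\Y^{-1}\X\,\S^{n})$. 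I would package each family into a generating function in a variable $z$ (equivalently, test against the resolvent $(\I-z\S)^{-1}$ for small $z$) and read off a functional identity for the moment transform of $\S$.

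The key computational device is the combination of the BLS formula \eqref{BLS} with the vanishing of mixed free cumulants. To evaluate $\varphi(\X^{-1}\S^{n})$ I single out $\X^{-1}$ as the first factor in \eqref{BLS}; the relevant cumulants are $\mathcal{R}_k(\X^{-1},\S,\dots,\S)$, and expanding $\S=\X+\Y$ multilinearly, every term containing a factor from the $\Y$-algebra is a mixed cumulant and hence vanishes by freeness. Thus $\mathcal{R}_k(\X^{-1},\S,\dots,\S)=\mathcal{R}_k(\X^{-1},\X,\dots,\X)$, which is exactly the sequence $C_k$ controlled by Lemma \ref{lem_cum} applied to $\X$. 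Feeding this into \eqref{BLS} and summing the generating series yields, from the first hypothesis, a closed functional equation relating $C(z)=\sum_i C_i z^{i-1}$ (hence $r_\X$) to the moment transform of $\S$; and since $\X,\Y$ are free, $r_\S=r_\X+r_\Y$ by \eqref{freeconv}, so this is in fact one equation in $r_\X$ and $r_\Y$.

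The second hypothesis, involving the triple product $\X\Y^{-1}\X$, is the main obstacle. Writing $\varphi(\X\Y^{-1}\X\,\S^{n})=\varphi(\Y^{-1}\X\S^{n}\X)$ by traciality and singling out $\Y^{-1}$ in \eqref{BLS}, freeness now forces the cumulant attached to $\Y^{-1}$ to contain only $\Y$-algebra entries (the $\Y$-parts arising from expanding the various $\S$ factors), while the interlaced $\X$'s and $\S$'s must organize into the moment factors of \eqref{BLS}. Carrying out this bookkeeping should reduce $\varphi(\X\Y^{-1}\X\,\S^{n})$ to the scalar sequence $C_k$ for $\Y$ (Lemma \ref{lem_cum} applied to $\Y$), the ordinary cumulants of $\X$, and moments of $\S$. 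The delicate point is to control the alternating $\X/\Y$ structure so that the resulting generating series closes in terms of $r_\X$ and $r_\Y$ alone; this is precisely where the techniques of \cite{SzpojanWesol,SzpDLRNeg} have to be extended. The outcome is a second functional equation in $r_\X$ and $r_\Y$.

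Finally I would solve the two functional equations simultaneously. Eliminating the moment transform of $\S$ (equivalently $r_\X+r_\Y$) between them, I expect the system to collapse to an algebraic constraint that forces $r_\Y$ to have the free Poisson form $r_\Y(z)=\lambda\gamma/(1-\gamma z)$ and $r_\X$ to have the free GIG form \eqref{RtrGIG}, with the correct branch of the square root fixed by analyticity at the origin (cf. \eqref{Crr}). Matching coefficients then identifies the parameters as $\lambda=cd/(cd-1)$, $\beta=d/(cd-1)$ and $\alpha=\delta_0/(cd-1)$; since the hypotheses impose only the two scalar constraints $c,d$ while the GIG/Poisson family carries three parameters, one scale parameter $\delta_0>0$ must remain free, exactly as in the statement. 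Along the way one checks the admissibility conditions---in particular $cd>1$, which is what makes $\lambda>1$ and $\alpha,\beta>0$---and these should follow from the positivity and non-degeneracy of $\X$ and $\Y$.
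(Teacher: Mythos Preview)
Your treatment of the first hypothesis is exactly what the paper does: single out $\X^{-1}$ in \eqref{BLS}, use freeness to reduce the mixed cumulants to $\mathcal{R}_k(\X^{-1},\X,\ldots,\X)$, and invoke Lemma \ref{lem_cum}. The paper packages this into the generating function $B(z)=A(z)C_\X(zA(z))$ just as you describe.

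The real divergence is in the second hypothesis, and here you are working much harder than necessary. The trick you are missing is purely algebraic: write $\X=(\X+\Y)-\Y$ on the rightmost $\X$ in $\X\Y^{-1}\X\,\S^{n}$ to get $\X\Y^{-1}\S^{n+1}-\X\S^{n}$, then do it again on the leftmost $\X$ (using traciality) to get $\Y^{-1}\S^{n+2}-\S^{n+1}-\X\S^{n}$. After cancellation the second family becomes simply
\[
\varphi\!\left(\Y^{-1}\S^{n+2}\right)-\varphi\!\left(\S^{n+1}\right)=d\,\varphi\!\left(\S^{n}\right),
\]
which is structurally identical to the first family with $\X^{-1}$ replaced by $\Y^{-1}$ and a shift of index. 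So the ``delicate bookkeeping'' of the alternating $\X/\Y$ structure never has to be done; both hypotheses feed into the \emph{same} BLS computation, once for $\X$ and once for $\Y$, and Lemma \ref{lem_cum} applied to $\Y$ gives $D(z)=A(z)C_\Y(zA(z))$ directly. Your route via $\varphi(\Y^{-1}\X\S^{n}\X)$ may well be closable, but you flag it as the main obstacle without carrying it out, and it is not clear it closes in terms of $r_\X,r_\Y$ alone.

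There is also a substantial step you pass over. Once the system is solved you obtain a quadratic in $A(z)$, hence an explicit candidate $G_{\X+\Y}$ depending on $c,d,\delta_0,\alpha_{-1}$. The paper does \emph{not} ``match coefficients'' to read off the free GIG form; rather it analyzes the quartic under the square root and runs a genuine case analysis on its root structure, using positivity of $\X,\Y$ and analyticity of the Cauchy transform on $\mathbb{C}^{+}$ to exclude two of the three cases. Only in the remaining case (a double negative root) does the quartic factor so that $G_{\X+\Y}$ is recognized as the Cauchy transform of $\mu(\lambda,\alpha,\beta)$, and this simultaneously pins down $\alpha_{-1}$. Finally, the paper identifies the law of $\X$ not by computing $r_\X$ but by invoking the free convolution identity $\mu(-\lambda,\alpha,\beta)\boxplus\nu(1/\alpha,\lambda)=\mu(\lambda,\alpha,\beta)$ (Remark \ref{GIGPoissConv}). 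Your sketch skips this entire positivity analysis, which is where the admissibility of the parameters is actually established.
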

\begin{proof}
Note that 
By Hua's identity \eqref{Hua} we have $\V^{-1}=\X+\X\Y^{-1}\X$. By this we can rewrite second equation from \eqref{asmp} as
\begin{align*}
\varphi\left(\X+\X\Y^{-1}\X|\X+\Y\right)=d\mathbb{I}.
\end{align*}
Now we multiply the above equation by $(\X+\Y)^{n}$ and apply the state $\varphi$, to both sides of the equation, which gives
\begin{align*}
\varphi\left(\X(\X+\Y)^n\right)+\varphi\left(\X\Y^{-1}\X(\X+\Y)^n\right)=d\varphi\left((\X+\Y)^n\right).
\end{align*}
Note that 
\begin{align*}
\varphi\left(\X\Y^{-1}\X(\X+\Y)^n\right)=&\varphi\left(\X\Y^{-1}(\X+\Y)^{n+1}\right)
-\varphi\left(\X\left(\X+\Y\right)^n\right)\\
=&\varphi\left(\Y^{-1}\left(\X+\Y\right)^{n+2}\right)-\varphi\left((\X+\Y)^{n+1}\right)-\varphi\left(\X\left(\X+\Y\right)^n\right).
\end{align*}
This gives us 
\begin{align}
\label{ac2}
\varphi\left(\Y^{-1}\left(\X+\Y\right)^{n+2}\right)-\varphi\left((\X+\Y)^{n+1}\right)=
d\varphi\left((\X+\Y)^n\right).
\end{align}
Multiplying the first equation from \eqref{asmp} by $(\X+\Y)^n$ we obtain
\begin{align}
\label{ac1}
\varphi\left(\X^{-1}(\X+\Y)^n\right)-\varphi\left((\X+\Y)^{n-1}\right)=c\varphi\left((\X+\Y)^n\right).
\end{align}
Let us denote for $n\geq -1$,
\begin{align*}
\alpha_n=\varphi\left((\X+\Y)^n\right),
\end{align*}
and for $n\geq 0$
\begin{align*}
\beta_n=&\varphi\left(\X^{-1}(\X+\Y)^n\right),\\
\delta_n=&\varphi\left(\Y^{-1}(\X+\Y)^n\right).
\end{align*}
Then equations \eqref{ac2} and \eqref{ac1} for $n\geq 0$ can be rewritten as
\begin{align}
\label{s1}
\beta_n-\alpha_{n-1}=&c\alpha_n,\\
\delta_{n+2}-\alpha_{n+1}=&d\alpha_n \nonumber
\end{align} 
Now we define generating functions of sequences $\left(\alpha_n\right)_{n\geq 0}$, $\left(\beta_n\right)_{n\geq 0}$, $\left(\delta_n\right)_{n\geq 0}$
\begin{align*}
A(z)=\sum_{n=0}^{\infty} \alpha_n z^n,\qquad B(z)=\sum_{n=0}^{\infty} \beta_n z^n, \qquad
D(z)=\sum_{n=0}^{\infty} \delta_n z^n.
\end{align*}
First we will determine the distribution of $\Y$ in a similar way as it was done in \cite{SzpojanWesol}.\\
System of equations \eqref{s1} is equivalent to
\begin{align}
\label{gfe1}
B(z)-z\left(A(z)+\frac{\alpha_{-1}}{z}\right)=&cA(z),\\
\frac{1}{z^2}\left(D(z)-\delta_1 z-\delta_0\right)-\frac{1}{z}\left(A(z)-1\right)=&dA(z)\nonumber.
\end{align}
Next we use formula \eqref{BLS} to express function $B$ in terms of function $A$ and the $R$-transform of $\X$.
\begin{align*}
\beta_n=&\varphi\left(\X^{-1}(\X+\Y)^n\right)=
\Rr_1\left(\X^{-1}\right)\varphi\left((\X+\Y)^n\right)\\
+&\Rr_2\left(\X^{-1},\X\right)\left(\varphi\left((\X+\Y)^{n-1}\right)+\varphi(\X+\Y)\varphi\left((\X+\Y)^{n-2}\right)
+\ldots+\varphi\left((\X+\Y)^{n-1}\right)\right)\\
+&\ldots+\\
+&\Rr_{n+1}\left(\X^{-1},\X,\ldots,\X\right).
\end{align*}
Which means that for $n\geq 0$
\begin{align*}
\beta_n=\sum_{i=1}^{n+1}\R_i\left(\X^{-1},\underbrace{\X,\ldots,\X}_{i-1}\right)\sum_{k_1+\ldots+k_i=n+1-i}\alpha_{k_1}\cdot\ldots\cdot\alpha_{k_i}.
\end{align*}
The above formula for sequence $\beta_n$ allows us to write
\begin{align*}
B(z)=&\sum_{n=0}^{\infty}\beta_n z^n=\sum_{n=0}^\infty\sum_{i=1}^{n+1} \Rr_i\left(\X^{-1},\X,\ldots,\X\right)z^{i-1}
\sum_{k_1+\ldots+k_i=n+1-i}\alpha_{k_1}z^{k_1}\cdot\ldots\cdot\alpha_{k_i}z^{k_i}\\
=&\sum_{i=1}^{\infty}\Rr_i\left(\X^{-1},\X,\ldots,\X\right)z^{i-1}\sum_{n=i-1}^{\infty}
\sum_{k_1+\ldots+k_i=n+1-i}\alpha_{k_1}z^{k_1}\cdot\ldots\cdot\alpha_{k_i}z^{k_i}\\
=&\sum_{i=1}^{\infty}\Rr_i\left(\X^{-1},\X,\ldots,\X\right)z^{i-1}A^{i}(z).
\end{align*}
Which implies
\begin{align}
\label{fB}
B(z)=A(z)C_\X(zA(z))
\end{align}
Where we denote $C_\X=\sum_{n=1}^{\infty}\Rr_n\left(\X^{-1},\underbrace{\X,\ldots,\X}_{n-1}\right)z^{n-1}$.\\
A similar argument gives us 
\begin{align}
\label{fD}
D(z)=A(z)C_\Y(zA(z)).
\end{align}
Lemma \ref{lem_cum} implies 
\begin{align}
\label{irt}
C_\X(zA(z))&=\frac{zA(z)+\beta_0}{1+zA(z)r_\X(zA(z))},\\
C_\Y(zA(z))&=\frac{zA(z)+\delta_0}{1+zA(z)r_\Y(zA(z))}.\nonumber
\end{align}
Putting equations \eqref{fB}, \eqref{fD}, \eqref{irt} into \eqref{gfe1} we get
\begin{align}
\label{gfe2}
A(z)\frac{zA(z)+\beta_0}{1+zA(z)r_\X(zA(z))}-z\left(A(z)+\frac{\alpha_{-1}}{z}\right)=&cA(z)\\
\frac{1}{z^2}\left(A(z)\frac{zA(z)+\delta_0}{1+zA(z)r_\Y(zA(z))}-\delta_1z-\delta_0\right)-
\frac{1}{z}(A(z)-1)=&dA(z)\nonumber
\end{align}
Note that $\delta_1=\varphi\left(\Y^{-1}(\X+\Y)\right)=\varphi\left(\Y^{-1}\X\right)+1$.
Moreover from second equation in \eqref{asmp} we have $\varphi(\Y^{-1}\X)=\varphi(\X\Y^{-1})=d\varphi\left((\X+\Y)^{-1}\right)$, this gives us $\delta_1=d\alpha_{-1}+1$. \\
From the first equation in \eqref{asmp} we get $\beta_0=c+\alpha_{-1}$.

Let us note that $A(z)=M_{\X+\Y}(z)=\frac{1}{z}G_{\X+\Y}\left(\frac{1}{z}\right)$. Equation \eqref{Crr2} implies that
\begin{align*}
r_{\X+\Y}(zA(z))=r_{\X+\Y}\left(G_{\X+\Y}\left(\frac{1}{z}\right)\right)=\frac{1}{z}-\frac{1}{G_{\X+\Y}\left(\frac{1}{z}\right)}=
\frac{1}{z}-\frac{1}{zA(z)}.
\end{align*}
This allows us to write $zA(z)r_{\X}(zA(z))=zA(z)r_{\X+\Y}(zA(z))-zA(z)r_\Y(zA(z))=A(z)-1-zA(z)r_\Y(zA(z)).$\\
Finally we can put this into \eqref{gfe2} and we obtain
\begin{align}
\label{gfe3}
A(z)\frac{zA(z)+c+\alpha_{-1}}{A(z)-zA(z)r_\Y(zA(z))}-z\left(A(z)+\frac{\alpha_{-1}}{z}\right)=&cA(z),\\
\frac{1}{z^2}\left(A(z)\frac{zA(z)+\delta_0}{1+zA(z)r_\Y(zA(z))}-(d\alpha_{-1}+1)z-\delta_0\right)-
\frac{1}{z}(A(z)-1)=&dA(z).\nonumber
\end{align}
Defining $h(z)=zA(z)r_\Y(zA(z))$ we have
\begin{align*}
A(z)\frac{zA(z)+c+\alpha_{-1}}{A(z)-h(z)}-z\left(A(z)+\frac{\alpha_{-1}}{z}\right)=&cA(z),\\
\frac{1}{z^2}\left(A(z)\frac{zA(z)+\delta_0}{1+h(z)}-(d\alpha_{-1}+1)z-\delta_0\right)-
\frac{1}{z}(A(z)-1)=&dA(z).
\end{align*}
After simple transformation of the above equations we arrive at
\begin{align*}
h(z)&=\frac{A(z)c(A(z)-1)}{A(z)(c+z)+\alpha_{-1}},\\
h(z)&=\frac{A(z)(zA(z)+\delta_0)}{dz^2A(z)+z(A(z)+d\alpha_{-1})+\delta_0}-1.
\end{align*}
Comparing the right hand sides of the above equations we obtain 
\begin{align*}
\frac{cA^2(z)+zA(z)+\alpha_{-1}}{cA(z)+zA(z)+\alpha_{-1}}=\frac{A(z)(zA(z)+\delta_0)}{dz^2A(z)+z(A(z)+d\alpha_{-1})+\delta_0}.
\end{align*}
After multiplying the above equation by the denominators, which is allowed in a neighbourhood of zero, we get
\begin{align*}
(zA(z)+\alpha_{-1})\left((cd-1) zA^2(z)+(dz^2+z-\delta_0)A(z)+zd\alpha_{-1}+\delta_0\right)=0.
\end{align*}
Since $A(0)=1$ and $\alpha_{-1}>0$, in some neighbourhood of zero we can divide the above equation by $zA(z)+\alpha_{-1}$, and we obtain
\begin{align}
\label{GIGmtr}
(cd-1) zA^2(z)+(dz^2+z-\delta_0)A(z)+zd\alpha_{-1}+\delta_0=0.
\end{align}
Recall that 
\begin{align*}
zA(z)r_\Y(zA(z))=&h(z)=\frac{A(z)(zA(z)+\delta_0)}{dz^2A(z)+z(A(z)+d\alpha_{-1})+\delta_0}-1\\=&
\frac{zA^2(z)+\delta_0A(z)-dz^2A(z)-z(A(z)+d\alpha_{-1})-\delta_0}{dz^2A(z)+z(A(z)+d\alpha_{-1})+\delta_0},
\end{align*}
where $\delta_0>0$.
Using \eqref{GIGmtr} we obtain
\begin{align*}
zA(z)r_\Y(zA(z))=&\frac{cdzA^2(z)}{zA^2(z)(1-cd)+A(z)\delta_0}=\frac{cdzA(z)}{\delta_0-zA(z)(cd-1)}.
\end{align*}
By the fact that $r_\Y$ is analytic in a neighbourhood of zero we obtain that \begin{align*}
r_\Y(z)=\frac{cd}{\delta_0-z(cd-1)}.
\end{align*}
Hence $\Y$ has the free Poisson distribution $\nu\left(\frac{cd-1}{\delta_0},\frac{cd}{cd-1}\right)$.

Now we will recover the distribution of $\X$, by computing the $R$-transform of $\X+\Y$. Note that the function $A$ determined by the equation \eqref{GIGmtr} is supposed to be the moment transform of $\X+\Y$. The Cauchy transform of $\X+\Y$ is equal to $G_{\X+\Y}(z)=\frac{1}{z}A\left(\frac{1}{z}\right)$, this gives us

\begin{figure}
	\includegraphics[scale=0.75]{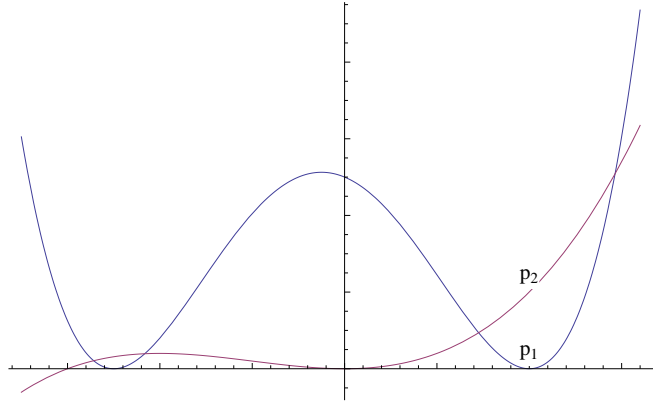}
	
	\caption{The plot of the polynomials $p_1$  and $p_2$ in the case 1. \label{Fig1}}
\end{figure}
\begin{align}
\label{Ctr_sum}
G_{\X+\Y}(z)=\frac{-d+z(z\delta_0-1)+\sqrt{(\delta_0z^2-z-d)^2-4(cd-1)z^2(d\alpha_{-1}+z\delta_0)}}{2(cd-1)z^2},
\end{align}
We have to analyze if the above function is for every set of parameters $c,d,\delta_0,\alpha_{-1}$ a Cauchy transform of a probabilistic measure. Moreover since we assume that $\X$ is positive, the support of the distribution of $\X$ must be contained in the positive half-line.\\
Let us focus on the expression under the square root 
\begin{align}
\label{usqrt}
p(z)=(\delta_0z^2-z-d)^2-4(cd-1)z^2(d\alpha_{-1}+z\delta_0).
\end{align}
Observe that the polynomial $p_1(z)=(\delta_0z^2-z-d)^2$ has two double roots which have different signs. It follows from positivity of $d$ and $\delta_0$. The polynomial $p_2(z)=4(cd-1)z^2(d\alpha_{-1}+z\delta_0)$ has a double root at the origin and a single root at $z=-\frac{d\alpha_{-1}}{\delta_0}<0$. There are three possible cases depending on the value of $\alpha_{-1}$.\\

\textbf{Case 1}. For $\alpha_{-1}$ large enough $p=p_1-p_2$ has four real roots. An example of the graph of $p_1$ and $p_2$ in this case is presented on Figure \ref{Fig1}.

One can see that the function $p$ takes negative values for $x\in(x_0,x_1)$ where $x_0$ and $x_1$ are negative roots of $p$. From the Stielties inversion formula and the form of the Cauchy transform of $\X+\Y$ \eqref{Ctr_sum}, we see that the support of the distribution of $\X+\Y$ contains the set $N=\{x\in\mathbb{R},p<0\}$, since for $x\in N$, $\Im G_{\X+\Y}(x)\neq 0$. Hence in the case 2 the interval $(x_0,x_1)$ would be contained in the support of the distribution of $\X+\Y$ which contradicts the positivity of $\X$ and $\Y$.

\textbf{Case 2}. For $\alpha_{-1}$ small enough $p$ defined by \eqref{usqrt} has two real roots and two complex roots which are conjugate, an example of the graph of the polynomials $p_1$ and $p_2$ in the case 2 is presented on Figure \ref{Fig2}.

\begin{figure}
\includegraphics[scale=0.75]{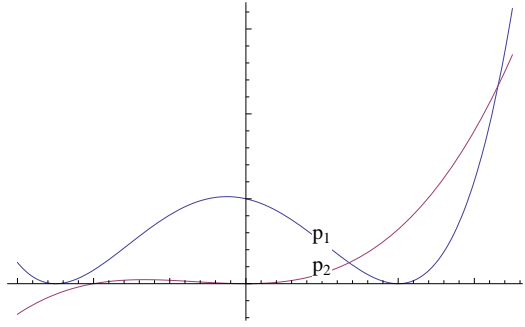}

\caption{The plot of the polynomials $p_1$  and $p_2$ in the case 2. \label{Fig2}}
\end{figure}

Recall that Cauchy transform is an analytic function from $\mathbb{C}^+$ to $\mathbb{C}^-$. In the analysed case the polynomial under the square root has a zero in  $\mathbb{C}^+$, so the function \eqref{Ctr_sum} is not analytic on $\mathbb{C}^+$ and can not be a Cauchy transform of a probabilistic measure.

\textbf{Case 3}. The remaining possibility is that $\alpha_{-1}$ is such that polynomials $p_1$ and $p_2$ are tangent for some $x_0<0$. Figure \ref{Fig3} gives an example of such situation. In this case the polynomial $p$ has two positive real roots $(0<a<b)$ and one double, real, negative root $x_0$. We will prove that in this case the function \eqref{Ctr_sum} is the Cauchy transform of free GIG distribution $\mu(cd/(cd-1),\delta_0/(cd-1),d/(cd-1)).$ \\
Let us substitute in \eqref{Ctr_sum} $c=\lambda/\beta,$ $d=\beta/(\lambda-1),$ $\delta=\alpha/(\lambda-1)$, for $\alpha,\beta>0$ and $\lambda>1$, this substitution is correct since $cd>1$ . The Cauchy transform \eqref{Ctr_sum} has the form

\begin{align*}
G_{\X+\Y}(z)=\frac{\alpha  z^2-\beta-(\lambda -1) z+\sqrt{\alpha ^2 z^4-2 \alpha  (\lambda +1) z^3+\left((\lambda -1)^2-2 \beta  (\alpha +2 \alpha_1)\right)z^2 +2 \beta  (\lambda -1) z+\beta^2}}{2 z^2},
\end{align*}
we choose the branch of the square root, such that the above function is analytic from $\mathbb{C}\,^{+}$ to $\mathbb{C}\,^{-}$.

Since polynomial under the square root has single roots $0<a<b$ and one double root $x_0<0$, Vieta's formulas for this polynomial give us
\begin{align}
\label{Vieta}
a+b+2x_0&=\frac{2(\lambda+1)}{\alpha},\\
ab+2x_0(a+b)+x_0^2&=\frac{(\lambda-1)^2-2(\alpha+2\alpha_{-1})\beta}{\alpha^2},\nonumber\\ \nonumber
2abx_0+(a+b)x_0^2&=-\frac{2\beta(\lambda-1)}{\alpha^2},\\ 
abx_0^2&=\frac{\beta^{2}}{\alpha^2}. \nonumber
\end{align}

\begin{figure}
	\includegraphics[scale=0.75]{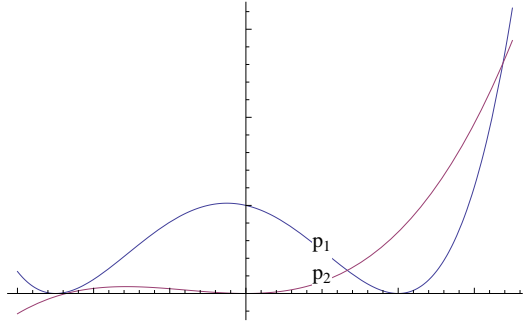}
	\caption{The plot of the polynomials $p_1$  and $p_2$ in the case 3. \label{Fig3}}
\end{figure}

We calculate $x_0$ from the last equation, since $x_0<0$ and $\alpha,a,b>0$ we get $x_0=-\frac{\beta}{\alpha\sqrt{ab}}$. From the first equation we obtain $x_0=\frac{\lambda+1}{\alpha}-\frac{a+b}{2}$. Comparing the formulas for $x_0$ we get 
\begin{align}
\label{Feral1}
1+\lambda+\frac{\beta}{\sqrt{ab}}-\alpha\frac{a+b}{2}=0.
\end{align}

Substituting $x_0=-\frac{\beta}{\alpha\sqrt{ab}}$ in the third equation from \eqref{Vieta} we obtain

\begin{align}
\label{Feral2}
1-\lambda+\alpha\sqrt{ab}-\beta\frac{a+b}{2ab}=0.
\end{align}

We can now write the function $G_{\X+\Y}$ transform as
\begin{align}
\label{Crt_sum}
G_{\X+\Y}(z)=\frac{\alpha  z^2-\beta-(\lambda -1) z+\left(\alpha z+\frac{\beta}{\sqrt{ab}}\right)\sqrt{(z-a)(z-b)}}{2 z^2}
\end{align}
with parameters satisfying \eqref{Feral1} and \eqref{Feral2} we recognize that this is the Cauchy transfrom of the free GIG distribution $\mu(\lambda,\alpha,\beta)$, where $\lambda=cd/(cd-1),$ $\alpha=\delta_0/(cd-1),$ $\beta=d/(cd-1))$. 
The second equation from \eqref{Vieta} gives the value of $\alpha_{-1}$ for which the Cauchy transform \eqref{Crt_sum} satisfies the assumptions of the case 3, one can check that $\alpha_{-1}=-\gamma$, where $\gamma$ is the constant defined in equation \eqref{constC}. 

Since the distribution of $\X+\Y$ is free GIG $\mu(\lambda,\alpha,\beta)$ and the distribution of $\Y$ is free Poisson $\nu(1/\alpha,\lambda)$ then by Remark \ref{GIGPoissConv} and freeness of $\X$ and $\Y$ it is immediate that $\X$ has the free GIG distribution $\mu(-\lambda,\alpha,\beta)$. 

\end{proof}

We can summarize the results proved in this paper by the following characterization of the free GIG and the free Poisson distributions.
\begin{remark}
Let $\X$ and $\Y$ be positive compactly supported random variables. Random variables $\U=(\X+\Y)^{-1}$ and $\V=\X^{-1}-(\X+\Y)^{-1}$ are free if and only if there exist constants $\alpha,\beta>0$ and $\lambda>1$ such that $\X$ has free GIG distribution $\mu(-\lambda,\alpha,\beta)$ and $\Y$ has free Poisson distribution $\nu(1/\alpha,\lambda)$. 
\end{remark}
\begin{proof}
	The "if" part is exactly the statement of Theorem \ref{freeMYproperty}. The "only if" part follows from Theorem \ref{main} since for free random variables conditional expectations are constant.
\end{proof}

\subsection*{Acknowledgement} The author thanks J. Weso\l{}owski for helpful comments and discussions. The author is grateful to W. Bryc for sending his notes about the free Matsumoto-Yor. The author also thanks F. Benaych-Georges for suggesting Lemma \ref{convergence} and the way to prove it. This research was partially supported by NCN grant 2012/05/B/ST1/00554.
\bibliographystyle{plain}
\bibliography{Bibl}

\end{document}